\begin{document}

\author[Adams]{Henry Adams}
\address[HA]{Department of Mathematics, University of Florida, Gainesville, FL 32611, USA}
\email{henry.adams@ufl.edu}

\author[Carvajal]{Julian Carvajal}
\address[JC]{Department of Mathematics, University of Florida, Gainesville, FL 32611, USA}
\email{j.carvajal@ufl.edu}

\author[Rhodes]{Jake Rhodes}
\address[JR]{Department of Mathematics, University of Florida, Gainesville, FL 32611, USA}
\email{wrhodes1@ufl.edu}

\author[Turillo]{Niccolo Turillo}
\address[NT]{Department of Mathematics, University of Florida, Gainesville, FL 32611, USA}
\email{nturillo@ufl.edu}

\author[Ye]{Jingkai Ye}
\address[JY]{Department of Mathematics, University of Florida, Gainesville, FL 32611, USA}
\email{jye1@ufl.edu}

\author[Ying]{Raymond Ying}
\address[RY]{Department of Mathematics, University of Florida, Gainesville, FL 32611, USA}
\email{raymondying@ufl.edu}

\title{
Vietoris--Rips complexes of ellipses at larger scales
}

\begin{abstract}
For $X$ a metric space and $r>0$, the Vietoris--Rips simplicial complex $\vr{X}{r}$ has $X$ as its vertex set, and a finite subset $\sigma \subseteq X$ as a simplex whenever the diameter of $\sigma$ is less than $r$.
In~\cite{AAR}, the authors studied the homotopy types of Vietoris--Rips complexes of ellipses $E_a=\{(x,y)\in \mathbb{R}^2~|~(x/a)^2+y^2=1\}$ of small eccentricity, meaning $1<a< \sqrt{2}$, at small scales $r < \frac{4\sqrt{3}a^2}{3a^2+1}$.
In this paper, we further investigate the homotopy types that appear at larger scales.
In particular, we identify the scale parameters $r$, as a function of the eccentricity $a$, for which the Vietoris--Rips complex $\vr{E_a}{r}$ is homotopy equivalent to a $3$-sphere, to a wedge sum of $4$-spheres, or to a $5$-sphere.
\end{abstract}

\maketitle
  

\section{Introduction}
\label{sec:intro}

Let $X$ be a metric space and let $r>0$ be a scale parameter.
The Vietoris--Rips complex of $X$, denoted $\vr{X}{r}$, is an abstract simplicial complex --- a space formed by gluing vertices, edges, triangles, and higher-dimensional simplices together --- with vertex set $X$.
Indeed, we define a finite subset $\sigma\subseteq X$ to be a simplex of $\vr{X}{r}$ whenever $\diam(\sigma)< r$.

In applications, $X$ is usually a finite subset, assumed to have been sampled from a larger space $M$, and $\vr{X}{r}$ is used to obtain topological information about $M$.

Vietoris--Rips complexes are widely used in persistent homology~\cite{EdelsbrunnerHarer,Carlsson2009}, which tracks homological features of $\vr{X}{r}$ as $r$ grows, to see which features persist with the scale parameter.
There are a number of theoretical results that justify using Vietoris--Rips complexes:
\begin{enumerate}
\item
If $M$ is a Riemannian manifold, and $r$ is sufficiently small, then $\vr{M}{r}$ is homotopy equivalent to $M$ itself by Hausmann's theorem~\cite{Hausmann1995}.
\item
If $M$ is a Riemannian manifold, $r$ is sufficiently small, and $X$ is close enough to $M$ in the Gromov--Hausdorff distance (say $X$ is a noisy sample), then Latschev proved that $\vr{X}{r}$ is still homotopy equivalent to $M$~\cite{Latschev2001}.
\item
If $X_1,X_2,\ldots$ are a sequence of samples near a metric space $M$ that converge to $M$ in the Gromov--Hausdorff distance, then the persistent homology of $\vr{X}{r}$ converges to a limiting object, namely the persistent homology of $\vr{M}{r}$~\cite{ChazalDeSilvaOudot2014}.
\end{enumerate}
While (1) and (2) both require that $r$ be sufficiently small, (3) does not.
In practice, the underlying space $M$ is unknown, and thus the values of $r$ which satisfy (1) and (2) are also generally unknown.
While (3) guarantees convergence to some limiting object, little is known about the limiting object---the persistent homology of $\vr{M}{r}$---itself.

To our knowledge, the only compact non-contractible Riemannian manifold for which the homotopy types of its Vietoris--Rips complexes are known for all scales is the circle.
It has been shown that if $S^1$ is the circle, then $\vr{S^1}{r}$ obtains all homotopy types of the form $S^1$, $S^3$, $S^5$, $S^7$,\ldots as $r$ increases, until it is finally contractible~\cite{AA-VRS1}.

Let $E_a=\{(x,y)\in\R^2~|~(x/a)^2+y^2=1\}$ be the ellipse with semi-major axis of length $1 < a < \sqrt{2}$ and with semi-minor axis of length one.
We equip $E_a$ with the Euclidean metric.
With this metric, $E_a$ is a manifold but not a Riemannian manifold.

In 2017, Adamaszek, Adams, and Reddy~\cite{AAR} showed that if $E_a$ is an ellipse of small eccentricity (namely $1<a<\sqrt{2}$), and if $r_1 = \frac{4\sqrt{3}a}{a^2+3}$ and $r_2=\frac{4\sqrt{3}a^2}{3a^2+1}$, then $\vr{E_a}{r}\simeq S^1$ for $0<r\leq r_1$ and $\vr{E_a}{r}\simeq S^2$ for $r_1<r\leq r_2$.

We investigate the homotopy types of Vietoris--Rips complexes of ellipses, $\vr{E_a}{r}$, that appear at larger scales $r > r_2 =\frac{4\sqrt{3}a^2}{3a^2+1}$.
We supply strong empirical evidence supporting a conjecture about inscribed 5-pointed stars inside of the ellipse $E_a$, which, if true, is strong enough to imply the next homotopy types of $\vr{E_a}{r}$ at larger scales.
More specifically, the next possible homotopy types of $\vr{E_a}{r}$ at larger scales include $S^3$, $S^4$, $\bigvee^3 S^4$, and $S^5$.

\begin{figure}
    \centering
    \includegraphics[width=0.45\linewidth]{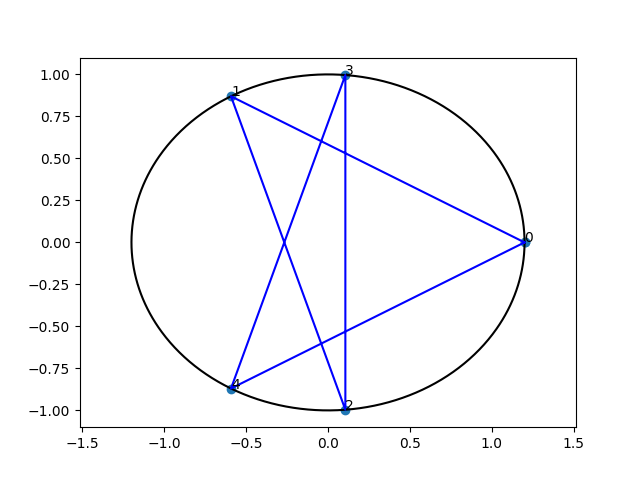}
    \includegraphics[width=0.45\linewidth]{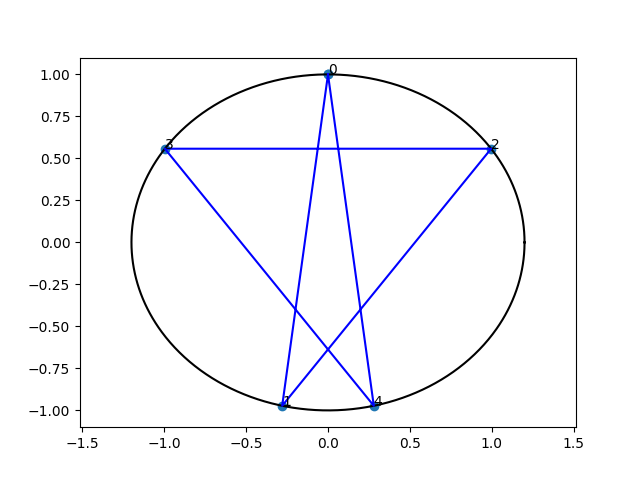}
    \caption{(Left) A minimal inscribed star containing $(a,0)$, and (Right) a maximal inscribed star containing $(0,1)$, both found computationally.}
    \label{fig:minimal-star}
\end{figure}



A main object we study are \emph{$5$-pointed stars wrapping twice around $E_a$}.
Let $S=(p_0,\ldots,p_4)$ be cyclically ordered points on an ellipse $E_a$.
Draw line segments between each $p_i$ and $p_{i+2}$, where all indices are taken modulo $5$.
If all line segments are of equal length, then we call $S$ a \emph{$5$-pointed star}.
Consider the arcs traversed along the ellipse between successive points $p_i$ and $p_{i+2}$ on a star $S$.
If the sum of the total angle traveled on these arcs is $4\pi$, then we say that the star $S$ \emph{wraps twice around $E_a$}; see Figure~\ref{fig:minimal-star}.

Let $1\le a<\sqrt{2}$.
At each point $p\in E_a$, there is a unique $5$-pointed star based at $p$ wrapping twice around.
We define the side length function $s_a\colon E_a\to \R$ by setting $s_a(p)$ equal to the diameter of the star starting at $p$.
(Here, we write $s_a$ with domain $E_a$ with slight abuse of notation.
As we will see later, $s_a$ is formally defined with domain $S^1$, the circle.)


\begin{conjecture}
\label{conj:main}
Let $1< a<\sqrt{2}$.
We conjecture there are real values $1 < a_1^- < a_1 < a_1^+ < a_2^- < a_2 < a_2^+ <\sqrt{2}$ with $a_1\approx 1.3299$ and $a_2\approx 1.4123$ so that the side-length function $s_a\colon E_a\to \R$ satisfies the following.
\begin{itemize}
\item If $1 < a \le a_1^-$, then $s_a$ has ten global minima including $(\pm a,0)$ and ten global maxima including $(0,\pm 1)$.
\item If $a_1^- < a <a_1$, then $s_a$ has ten global minima including $(\pm a,0)$, ten local minima including $(0,\pm 1)$, and twenty global maxima.
\item If $a = a_1$, then $s_a$ has twenty global minima including $(\pm a,0)$ and $(0,\pm 1)$ and twenty global maxima.
\item If $a_1 < a < a_1^+$, then $s_a$ has ten global minima including $(0,\pm 1)$, ten local minima including $(\pm a,0)$, and twenty global maxima.
\item If $a_1^+ \le a \le a_2^-$, then $s_a$ has ten global minima including $(0,\pm 1)$ and ten global maxima including $(\pm a,0)$.
\item If $a_2^- < a < a_2$, then $s_a$ has ten global minima including $(0, \pm 1)$, ten local minima including $(\pm a, 0)$, and twenty global maxima.
\item if $a = a_2$ then $s_a$ has twenty global minima including $(0, \pm 1)$ and $(\pm a, 0)$, and twenty global maxima.
\item if $a_2 < a < a_2^+$ then $s_a$ has ten global minima including $(\pm a, 0)$, ten local minima including $(0, \pm 1)$, and twenty global maxima.
\item if $a_2^+ \le a < \sqrt{2}$, then $s_a$ has ten global minima including $(\pm a, 0)$ and ten global maxima including $(0, \pm 1)$.
\end{itemize}
In all cases, $s_a$ is monotonic between adjacent extrema mentioned above.
\end{conjecture}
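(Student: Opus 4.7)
The plan is to reduce the conjecture to a one-parameter critical-point problem and to extract each of the threshold values $a_1^{\pm}, a_1, a_2^{\pm}, a_2$ as the root of an explicit (transcendental) equation in $a$.

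Parameterize $E_a$ by $p(\theta) = (a\cos\theta, \sin\theta)$, and for a basepoint $p(\theta_0)$ describe the star by four further angles $\theta_1,\ldots,\theta_4$ with $\theta_0 < \theta_1 < \theta_2 < \theta_3 < \theta_4 < \theta_0 + 2\pi$ (the cyclic order on the ellipse, with the double winding encoded in the chord adjacency $p_i \sim p_{i+2}$). The defining condition is the system of four equal-chord equations $|p(\theta_i) - p(\theta_{i+2})| = |p(\theta_0) - p(\theta_2)|$ for $i = 1, 2, 3, 4$ (indices mod $5$). First I would prove this system has a unique smooth branch of solutions in $\theta_0$ for each $a \in (1, \sqrt{2})$: start from $a = 1$, where the regular pentagram is the unique star, verify that the Jacobian of the chord-length system stays invertible along the continuation in $a$, and rule out vertex collisions using monotonicity of chord lengths in arclength. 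The side-length function is then $s_a(p(\theta_0)) = \sigma_a(\theta_0) := |p(\theta_0) - p(\theta_2(\theta_0))|$, smooth and $2\pi$-periodic.

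The reflections $\theta \mapsto -\theta$ and $\theta \mapsto \pi - \theta$ are isometries of $E_a$ which permute stars, so $\sigma_a$ is invariant under both, forcing $\theta_0 \in \{0, \pi/2, \pi, 3\pi/2\}$ to be critical points for every $a$; these are precisely the axial extrema $(\pm a, 0)$ and $(0, \pm 1)$. Implicit differentiation of the chord-length system at these four angles gives closed-form (though unwieldy) expressions in $a$ for $\sigma_a(0)$, $\sigma_a(\pi/2)$, $\sigma_a''(0)$, and $\sigma_a''(\pi/2)$. I would then \emph{define} $a_1^+, a_2^+$ as the roots in $(1, \sqrt{2})$ of $\sigma_a''(0) = 0$ and $a_1^-, a_2^-$ as the roots of $\sigma_a''(\pi/2) = 0$, at which the corresponding axial extremum changes type via a $\mathbb{Z}/2$-equivariant pitchfork bifurcation that absorbs or emits a symmetric pair of off-axis critical points; and $a_1, a_2$ as the roots of $\sigma_a(0) = \sigma_a(\pi/2)$, where the global-extremum roles of the major- and minor-axis endpoints swap. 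Each is a scalar equation in one variable on $(1, \sqrt{2})$, whose roots and relative ordering can be certified by interval arithmetic and matched against the numerical values $a_1 \approx 1.3299$ and $a_2 \approx 1.4123$.

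The main obstacle is the last sentence of the conjecture: one must bound the number of critical points of $\sigma_a$ uniformly in $a$ and establish monotonicity between adjacent extrema, so that the additional off-axis extrema in each regime are accounted for exactly. A purely analytic argument seems hopeless given the transcendental nature of $\sigma_a'(\theta_0;a)$, so I would pursue a computer-assisted proof: by $(\mathbb{Z}/2)^2$ symmetry it suffices to work on $(1, \sqrt{2}) \times [0, \pi/2]$; subdivide this rectangle into finitely many cells, and on each cell either certify a fixed sign for $\sigma_a'$ or isolate a single simple zero via an interval Newton method. Once monotonicity between consecutive critical points is established in this way, the global minima/maxima counts stated in each regime follow by comparing the finitely many critical values, whose relative ordering is controlled by the bifurcation equations above.
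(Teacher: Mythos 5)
This statement is explicitly labeled a \emph{conjecture} in the paper; the authors do not prove it, but instead supply numerical evidence (Section~\ref{sec:sidelength}, Figure~\ref{fig:star-side-lengths}) and a partial result, Theorem~\ref{thm:extrema}, which establishes only that at the two special eccentricities $a_1, a_2$ there are at least twenty local minima and twenty local maxima. So there is no proof in the paper to compare your proposal against; rather, your proposal is a roadmap for resolving what the authors pose as Open Question (3) in Section~\ref{sec:conclusion}. Some of your ingredients do coincide with the paper's partial results: the observation that the reflection symmetries of $E_a$ force $\theta_0 \in \{0,\pi/2,\pi,3\pi/2\}$ to be critical points is exactly the content of Lemma~\ref{lemma: poles are extrema}, and your characterization of $a_1, a_2$ as roots of $\sigma_a(0) = \sigma_a(\pi/2)$ is precisely the function $D(a) = s_N(a) - s_E(a)$ and the intermediate-value argument of Lemma~\ref{lemma:(3)}. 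One place the paper's setup is cleaner than yours: rather than constructing stars by solving the ten-variable chord system and running a continuation argument whose Jacobian invertibility you would have to certify, the paper defines $s_a$ via the iterated step map $F_a$ and a winding condition (Section~\ref{sec:JointContF}), for which existence and continuity are proved directly (Lemma~\ref{lemma:continuity of s_a}) without ever needing a nondegeneracy hypothesis on the algebraic system.

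Two concrete issues with the proposal. First, your pairing of the pitchfork thresholds with the second-derivative conditions is inconsistent with the conjecture: tracing the situation sequence 1-2-3-4-5-4-3-2-1, the transitions at $a_1^-$ and $a_2^+$ both occur at the north/south pole (so both should be roots of $\sigma_a''(\pi/2) = 0$), while the transitions at $a_1^+$ and $a_2^-$ both occur at the east/west pole (roots of $\sigma_a''(0)=0$). You instead pair $a_1^-$ with $a_2^-$ and $a_1^+$ with $a_2^+$, which would yield the wrong ordering and is not merely cosmetic, since your interval-arithmetic step is supposed to certify precisely this ordering. Second, and more fundamentally, the proposal as written does not close the gap it identifies: the claim that ``it suffices to work on $(1,\sqrt 2) \times [0,\pi/2]$, subdivide into cells, and on each cell certify a sign for $\sigma_a'$ or isolate a single zero'' is exactly the uniform critical-point bound that would constitute a proof, and no part of it has been carried out or shown to be feasible (the domain degenerates as $a \to 1^+$ and $a \to \sqrt 2^-$, $\sigma_a'$ has no closed form, and a simple zero of $\sigma_a'$ at each parameter slice does not by itself rule out critical-point creation/annihilation off the cell grid without additional derivative bounds). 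Until that computation is done and verified, this is a plan for a proof rather than a proof, which is the same gap the paper itself leaves open.
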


See Figure~\ref{fig:star-side-lengths} for experimental evidence for this conjecture.

Intuitively, we expect $(\pm a, 0)$ and $(0, \pm 1)$ to be extrema of $s_a$ by symmetry.
A more surprising consequence of our conjecture is that for some specific eccentricities $a_1 \approx 1.3299, a_2 \approx 1.4123$, these are not the only extrema.
In other words, for these specific eccentricities, there exist extrema corresponding to stars that do not contain points on either axis.
As a partial result in support of Conjecture~\ref{conj:main}, we make and prove the following theorem, which captures some of the surprising behavior.

\begin{theorem}
\label{thm:extrema}
There exist two distinct $a_1, a_2 \in (1, \sqrt{2})$ (with $a_1\approx 1.3299$ and $a_2 \approx 1.4123$) such that $s_a$ has at least twenty local minima and twenty local maxima.
\end{theorem}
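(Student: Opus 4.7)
The strategy is to combine a symmetry-orbit analysis with the intermediate value theorem.

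First, I would exhibit a symmetry group $G$ of order $20$ acting on $E_a$ and preserving $s_a$. Define the \emph{star shift} $\Phi\colon E_a\to E_a$ sending $p$ to the next vertex of the unique twice-wrapping star through $p$ (in the star's cyclic order). Since that star is also the star through $\Phi(p)$ by uniqueness, $\Phi^5=\mathrm{id}$ and $s_a\circ\Phi=s_a$. The axis reflections $R_x,R_y$ of the ellipse take stars to stars but reverse the star's cyclic orientation, so $R_x\Phi R_x^{-1}=\Phi^{-1}$. Together these generate a group $G\cong \mathbb Z_2\times D_5$ of order $20$ under which $s_a$ is invariant. The axis points $(\pm a,0)$ and $(0,\pm 1)$ each have stabilizer of order $2$ and lie in $G$-orbits of size $10$; a point $p$ whose star meets no axis has trivial stabilizer and a $G$-orbit of size $20$ (using that a $5$-pointed star, having an odd number of vertices, cannot be invariant under a non-trivial reflection or the $180^\circ$ rotation).

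Second, I would apply the IVT to the continuous function $t(a):=s_a(a,0)-s_a(0,1)$ on $(1,\sqrt 2)$. Each of the symmetric stars through $(a,0)$ and $(0,1)$ is fixed by the corresponding axis reflection, and can therefore be parametrized by two angles satisfying the equal-side-length and wrapping-twice conditions; solving this system yields $t(a)$ in closed form. The goal is to verify the sign pattern $t(a)<0$ for $a$ near $1$ (where $(a,0)$ is a local minimum and $(0,1)$ a local maximum, consistent with the small-scale regime of~\cite{AAR}), $t(a)>0$ at some intermediate $a$ (e.g.\ near $a\approx 1.37$, well inside the conjectured middle regime), and $t(a)<0$ for $a$ near $\sqrt 2$. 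Two applications of IVT then produce distinct $a_1,a_2\in(1,\sqrt 2)$ with $s_{a_i}(a_i,0)=s_{a_i}(0,1)$.

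Third, I would promote each $a_i$ to the required count of extrema. Restrict $s_{a_i}$ to the first-quadrant arc of $E_{a_i}$ from $(a_i,0)$ to $(0,1)$. Its endpoint values coincide and the function is nonconstant, so it attains either its maximum or its minimum at an interior point $p^*$, which is then a local extremum of $s_{a_i}$ on $E_{a_i}$ with $s_{a_i}(p^*)\neq s_{a_i}(a_i,0)$. Uniqueness of twice-wrapping stars forces the star at $p^*$ to differ from both axis stars, which share that common endpoint value as their side length, so none of the vertices of the star at $p^*$ lies on an axis. Consequently $p^*$ has trivial stabilizer in $G$ and its orbit produces $20$ local extrema of one type, say $20$ local maxima. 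Since $E_{a_i}$ is a topological circle and $s_{a_i}$ is continuous, at least one local minimum lies between each pair of adjacent local maxima, yielding $20$ local minima as well.

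The main obstacle is the sign analysis of $t(a)$: reducing the defining system of each axis star to a manageable single-variable equation and rigorously certifying two strict sign changes rather than tangential crossings. While the approximate values $a_1\approx 1.3299$ and $a_2\approx 1.4123$ come from numerical root-finding, the existence argument only requires exhibiting three specific values of $a$ at which $t$ has the three desired strict signs; explicit evaluation of the closed-form expression (or a short interval-arithmetic check) at such test points should complete the proof.
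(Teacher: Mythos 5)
Your strategy matches the paper's at the structural level: compute (rigorously) the sign of $s_N(a)-s_E(a)$ at a few test eccentricities, invoke the intermediate value theorem to produce $a_1,a_2$ with $s_{a_i}(a_i,0)=s_{a_i}(0,1)$, and then leverage the rich symmetry of inscribed stars to inflate equality at the poles into at least twenty local minima and twenty local maxima. Your predicted sign pattern for $t(a)=s_a(a,0)-s_a(0,1)$ (negative near $1$, positive in the middle, negative near $\sqrt 2$) is indeed what the paper verifies numerically using resultant polynomials $P_N,P_E$ obtained by elimination and certified with rational interval arithmetic, so that part would go through.

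Your step 3 takes a genuinely different route from the paper's. The paper notes that each of the four poles is a local extremum (a separate lemma), throws in the four additional star-vertices of each pole-based star to get twenty distinct points $v_0\prec\dots\prec v_{19}$ with equal $s_{a_i}$-value, and then argues by continuity that between each pair of adjacent $v_i$'s there lies an extremum of the opposite type. You instead exhibit the symmetry group $G\cong\mathbb Z_2\times D_5$ of order $20$ and find a single interior extremum $p^*$ off both axes; its $G$-orbit then delivers twenty extrema of one type directly, with twenty of the other type sandwiched between. This orbit-counting version is cleaner in that it bypasses the paper's lemma that the poles are themselves extrema, and it makes the role of the symmetry more transparent. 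Your analysis that the stabilizer of $p^*$ is trivial (a $5$-element star invariant under any reflection in $G$ would have to contain an axis vertex, which your $s_{a_i}(p^*)\neq c$ inequality forecloses) is correct, though as stated it slightly misphrases the key fact: a $5$-pointed star \emph{can} be invariant under a reflection, but only if one of its vertices lies on the axis.

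The genuine gap is your unsupported assertion that $s_{a_i}$ is ``nonconstant'' on the first-quadrant arc. This is exactly where the paper does real work: it shows via Bezout's theorem that the polynomial system cutting out inscribed stars of a fixed diameter has only finitely many solutions, hence $s_a^{-1}(c)$ is finite and $s_a$ cannot be constant on any arc. Without such a finiteness input the argument fails. Note that the claim is delicate: for $a=1$ the function $s_a$ \emph{is} constant, and at $a=a_i$ the equality $s_N(a_i)=s_E(a_i)$ is precisely the kind of coincidence that would be explained by constancy, so constancy cannot be dismissed as degenerate or irrelevant. You need an algebraic (or real-analytic) argument that, for a fixed $a>1$, there are only finitely many equilateral twice-wrapping $5$-stars of any given diameter; only then does your interior extremum $p^*$ with $s_{a_i}(p^*)\neq c$ exist, and only then does your orbit argument conclude the proof.
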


Let $1<a<\sqrt{2}$, let $r_1(a)=\frac{4\sqrt{3}a}{a^2+3}$, and let $r_2(a)=\frac{4\sqrt{3}a^2}{3a^2+1}$.
We know that these functions $r_1,r_2\colon \R\to \R$ give the global minimum and maximum side-length of inscribed equilateral triangles.
From~\cite{AAR}, we know that
\[\vr{E_a}{r}\simeq\begin{cases}
S^1 & \text{if }0<r\le r_1(a) \\
S^2 & \text{if }r_1(a)<r\le r_2(a).
\end{cases}\]
Assuming Conjecture~\ref{conj:main} is true, we prove the homotopy types of Vietoris--Rips complexes of ellipses at larger scale parameters.

\begin{theorem}
\label{thm:main}
Let $1 < a < \sqrt{2}$.
If Conjecture~\ref{conj:main} is true, then there are functions $r_3,r_4,r_5\colon \R\to \R$ (global minimum side-length of $5$-pointed stars, global maximum side-length of $5$-pointed stars, and global minimum side-length of $7$-pointed stars) such that
\begin{itemize}
\item If $1 < a \le a_1^-$ or $a_1^+ \le a \le a_2^-$ or $a_2^+ \le a < \sqrt{2}$, then 
\[\vr{E_a}{r}\simeq\begin{cases}
S^3 & \text{if }r_2(a)<r\le r_3(a) \\
S^4 & \text{if }r_3(a)<r\le r_4(a) \\
S^5 & \text{if }r_4(a)<r\le r_5(a).
\end{cases}\]
\item If $a = a_1$ or $a = a_2$, then 
\[\vr{E_a}{r}\simeq\begin{cases}
S^3 & \text{if }r_2(a)<r\le r_3(a) \\
\bigvee^3 S^4 & \text{if }r_3(a)<r\le r_4(a) \\
S^5 & \text{if }r_4(a)<r\le r_5(a).
\end{cases}\]
\item If $a_1^- < a <a_1$ or $a_1 < a < a_1^+$ or $a_2^- < a < a_2$ or $a_2 < a < a_2^+$, then there is an additional function $r_{7/2}\colon \R\to \R$ (local minimum side-length of $5$-pointed stars) so that
\[\vr{E_a}{r}\simeq\begin{cases}
S^3 & \text{if }r_2(a)<r\le r_3(a) \\
S^4 & \text{if }r_3(a)<r\le r_{7/2}(a) \\
\bigvee^3 S^4 & \text{if }r_{7/2}(a)<r\le r_4(a) \\
S^5 & \text{if }r_4(a)<r\le r_5(a).
\end{cases}\]
\end{itemize}
Furthermore, the inclusion $\vr{E_a}{r}\hookrightarrow\vr{E}{\tilde{r}}$
\begin{itemize}
\item is a homotopy equivalence for any $r_2(a)<r\le \tilde{r}\le r_3(a)$ or any $r_4(a)<r\le \tilde{r}\le r_5(a)$;
\item is a homotopy equivalence for any $r_3(a)<r\le \tilde{r}\le r_4(a)$ if $1 < a \le a_1^-$, $a = a_1$, $a_1^+ \le a \le a_2^-$, $a = a_2$, or $a_2^+ \le a < \sqrt{2}$;
\item is a homotopy equivalence for any $r_3(a)<r\le \tilde{r}\le r_{7/2}(a)$ or any $r_{7/2}(a)<r\le \tilde{r}\le r_4(a)$ if $a_1^- < a <a_1$, $a_1 < a < a_1^+$, $a_2^- < a < a_2$, or $a_2 < a < a_2^+$;
\item induces a rank $1$ map on 4-dimensional homology $H_4(-;\mathbb{F})$ for any field $\mathbb{F}$ for any $r_3(a)<r\le r_{7/2}(a)<\tilde{r}\le r_4(a)$ if $a_1^- < a <a_1$, $a_1 < a < a_1^+$, $a_2^- < a < a_2$, or $a_2 < a < a_2^+$.
\end{itemize}
\end{theorem}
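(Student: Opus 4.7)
The plan is to extend the approach of~\cite{AAR} and mirror the circle analysis of~\cite{AA-VRS1}, combined with a critical-filtration argument driven by Conjecture~\ref{conj:main}. I would identify the scales at which new minimal non-faces of $\vr{E_a}{r}$ become simplices---namely the diameters of inscribed equilateral triangles (at $r_1, r_2$), $5$-pointed stars wrapping twice (at $r_3, r_{7/2}, r_4$), and $7$-pointed stars (at $r_5$)---show that the homotopy type is constant between consecutive critical values, and compute the change at each critical value.

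\textbf{Constancy and the $S^3$ regime.} I would first prove the homotopy-equivalence bullets: on any interval $(r, \tilde{r}]$ containing no critical scale, one can pair each newly-added simplex with one of its cofaces to build a discrete-Morse matching, yielding a deformation retraction $\vr{E_a}{\tilde{r}}\searrow \vr{E_a}{r}$. In the interval $(r_2(a), r_3(a)]$, all inscribed triangles are simplices but no $5$-star is, so the cyclic-combinatorial structure of $\vr{E_a}{r}$ matches that of $\vr{S^1}{r'}$ in its $S^3$ regime (for a suitably rescaled $r'$). A cyclic-order-preserving homeomorphism $E_a\to S^1$ combined with the cyclic-interval description in~\cite{AA-VRS1} then delivers $\vr{E_a}{r}\simeq S^3$.

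\textbf{The $S^4$, $\bigvee^3 S^4$, and $S^5$ regimes.} As $r$ crosses $r_3(a)$, the global minimum of $s_a$ is attained and a batch of inscribed $5$-stars simultaneously becomes a collection of $4$-simplices, each attaching a $4$-cell whose attaching sphere lies in the existing $S^3$. If Conjecture~\ref{conj:main} says $s_a$ has ten global minima, the ten related stars yield a single $S^4$; at the critical eccentricities $a=a_1, a_2$ all twenty minima are global and the attaching configuration produces $\bigvee^3 S^4$ via a Mayer--Vietoris computation that uses the dihedral symmetry of $E_a$. In the intermediate eccentricities, the ten ``extra'' stars appear only at the later scale $r_{7/2}(a)$, producing the two-step transition $S^3\to S^4\to\bigvee^3 S^4$; the inclusion at $r_{7/2}(a)$ carries the earlier $S^4$ onto one summand of the wedge, which gives the rank-$1$ map on $H_4$ in the last bullet. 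Finally, at $r=r_4(a)$ the remaining $5$-stars (those at global maxima of $s_a$) become simplices, attaching further $4$- and $5$-cells whose joint effect collapses the wedge of $4$-spheres and contributes a single $5$-cell, producing $S^5$. This parallels the passage from $S^3$ to $S^5$ in the circle case.

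\textbf{Main obstacle.} The hardest step is pinning down the wedge count in the $\bigvee^3 S^4$ case: one must show that at the critical eccentricities the twenty attaching $3$-spheres give exactly three independent $4$-cycles in homology---not two, and not four. I expect this to require an equivariant chain-level computation using the explicit star geometry from Conjecture~\ref{conj:main}, tracking how the boundary $3$-spheres of the twenty $4$-cells decompose into dihedral orbits and how their boundary relations thin out the naive count to $3$. A closely related second computation is required at $r_4(a)$, where one must verify that the remaining attaching maps kill precisely the three $H_4$ classes while creating exactly one $H_5$ class; both steps hinge on the same combinatorial picture of how the cyclic ``star'' simplices fit together, so I would set them up in a unified framework before attacking either.
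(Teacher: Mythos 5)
Your proposal takes a genuinely different route from the paper, and in doing so it misses the machinery that makes this theorem tractable; I think the route as sketched has real gaps that the paper's route avoids.

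The paper does not do cell attachments, discrete Morse theory, or Mayer--Vietoris. Instead it treats the $1$-skeleton of $\vr{E_a}{r}$ as a (closed, continuous) cyclic graph and applies the general structure theorems of~\cite{AAR}: Theorem~5.1 there gives $\vr{E_a}{r}\simeq S^{2l+1}$ whenever $\frac{l}{2l+1}<\wf(\vr{E_a}{r})\le\frac{l+1}{2l+3}$ (non-singular case), and Theorem~5.3 gives $\vr{E_a}{r}\simeq\bigvee^{P+F-1}S^{2l}$ in the singular case $\wf=\frac{l}{2l+1}$, where $P$ is the number of periodic orbits and $F$ is the number of invariant sets of permanently fast points. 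So the ``hard part'' you single out---showing the wedge has exactly three summands, not two or four---is, on the paper's route, a one-line count: here $P=0$ (strict inequality in the Vietoris--Rips definition prevents periodic orbits), and Conjecture~\ref{conj:main} tells you there are four invariant intervals of fast points (two per inscribed minimal star family), so $F=4$ and $P+F-1=3$. No equivariant chain-level computation is needed. The rank-$1$ claim in the last bullet similarly falls out of counting periodic orbits in finite approximations and citing~\cite[Proposition~4.2]{AAR}, rather than tracking how a $4$-sphere lands in a wedge.

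Beyond missing this shortcut, two steps in your sketch are technically problematic as written. First, $E_a$ is an infinite metric space, so $\vr{E_a}{r}$ has uncountably many simplices; a discrete-Morse matching pairing ``each newly-added simplex with one of its cofaces'' is not available directly, and any interval $(r,\tilde r]$---even one containing no ``critical scale''---adds uncountably many simplices. The paper sidesteps this via the colimit of finite subcomplexes $\vr{W}{r}$ over $W\in\widetilde{\fin}(\vr{E_a}{r})$ together with cofinality and~\cite[Lemmas~5.8,~5.9]{AAR}; some version of that finite-approximation argument is unavoidable. Second, your proposed resolution of the wedge count is explicitly conjectural (``I expect this to require an equivariant chain-level computation\ldots''), so even granting the approach, that step is a gap rather than a proof. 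If you want to pursue an attaching-map/Mayer--Vietoris route, I would recommend first re-deriving~\cite[Theorem~5.3]{AAR} in your framework; otherwise you are reproving it implicitly in the hardest special case.
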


\begin{figure}[b]
\begin{center}
\begin{tikzpicture}[x=1cm,y=2cm]
  \draw[->] (-.5,0) -- (10,0) node[right] {$r$};
  \node[below] at (0,0) {$r_2(a)$};
  \node[below] at (3,0) {$r_3(a)$};
  \node[below] at (4.5,0) {$r_{7/2}(a)$};
  \node[below] at (6,0) {$r_4(a)$};
  \node[below] at (9,0) {$r_5(a)$};
  \draw[line width=1pt] (0, -0.05) -- (0, 0.05);
  \draw[line width=1pt] (3, -0.05) -- (3, 0.05);
  \draw[line width=1pt] (4.5, -0.05) -- (4.5, 0.05);
  \draw[line width=1pt] (6, -0.05) -- (6, 0.05);
  \draw[line width=1pt] (9, -0.05) -- (9, 0.05);
  \draw[line width=1pt]  (0,0.2) -- (9,0.2);
  \draw[line width = 1pt] (0,0.4) -- (3, 0.4);
  \node[above] at (1.5, 0.45) {$S^3$};
  \draw[line width = 1pt] (3,0.6) -- (6,0.6);
  \node[above] at (3.75, 0.65) {$S^4$};
  \draw[line width = 1pt] (4.5, 0.8) -- (6, 0.8);
  \draw[line width = 1pt] (4.5, 1.0) -- (6, 1.0);
  \node[above] at (5.25, 1.05) {$\bigvee^3 S^4$};
  \draw[line width = 1pt] (6, 1.2) -- (9, 1.2);
  \node[above] at (7.5, 1.25) {$S^5$};
\end{tikzpicture}
\caption{The homotopy types of $\vr{E_a}{r}$ as $r$ varies, for $a_1^- < a <a_1$, $a_1 < a < a_1^+$, $a_2^- < a < a_2$, or $a_2 < a < a_2^+$.}
\label{fig:barcodes}
\end{center}
\end{figure}
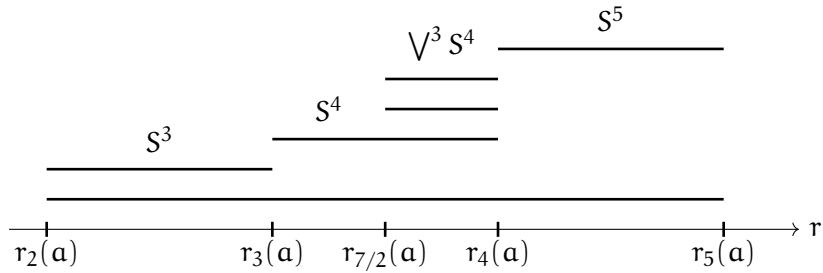

See Figure~\ref{fig:barcodes}.

We begin in Section~\ref{sec:relatedwork} with related work.
In Section~\ref{sec:preliminaries} we discuss the preliminaries related to this paper.
Then, we define the side-length function in Section~\ref{sec:sidelength} and discuss the evidence related to Conjecture~\ref{conj:main}.
Furthermore, we prove Theorem~\ref{thm:extrema} in Section~\ref{sec:extrema} and Theorem~\ref{thm:main} in Section~\ref{sec:maintheorem}.
In Section~\ref{sec:JointContF} we study the function that moves a point to the next clockwise point a fixed distance away, and prove the joint continuity of this function.
We conclude in Section~\ref{sec:conclusion} with a list of open questions.

\section{Related work}
\label{sec:relatedwork}

The Vietoris--Rips complex $\vr{X}{r}$ is often studied in the context of $X$ sampled from a manifold $M$.
A seminal result by Hausmann~\cite{Hausmann1995} showed that if $M$ is a compact Riemannian manifold, and $\epsilon > 0$ is sufficiently small compared to the curvature of $M$, then the Vietoris--Rips complex $\vr{M}{\epsilon}$ is homotopy equivalent to $M$.
Latschev~\cite{Latschev2001} extended Hausmann's work with the following result.
If $M$ is a closed Riemannian manifold, then there exists $\epsilon_0 > 0$ such that for all $0 < \epsilon \leq \epsilon_0$, there is $\delta > 0$ with the property that if the Gromov--Haursdorff distance between a metric space $X$ and $M$ is less than $\delta$, then $\vr{X}{\epsilon}$ is homotopy equivalent to $M$.
Following Hausmann and Latschev, Chazal et al.~\cite{ChazalDeSilvaOudot2014} show that for any two totally bounded metric spaces, their Vietoris--Rips filtrations are $\epsilon$-interleaved for any $\epsilon$ that is at least twice the Gromov--Hausdorff distance between them, and hence as a consequence the bottleneck distance between corresponding persistence diagrams is at most $\epsilon$.
So, if $X_1, X_2, X_3, \ldots$ is a sequence of datasets converging to some metric space $M$ in the Gromov--Hausdorff distance, then the persistent homology of $\vr{X_n}{-}$ converges to that of $\vr{M}{-}$ as $n\to \infty$.

The homotopy type of $\vr{X}{r}$ is not well understood for general $X$ and $r$, but when $X$ is the circle, the homotopy type is known for all values of $r$.
Indeed, as $r>0$ increases from $0$, the homotopy type of $\vr{S^1}{r}$ ranges from $S^1$ to $S^3$ to $S^5$, and so on, achieving the homotopy type of every odd dimensional sphere until eventually becoming contractible~\cite{AA-VRS1}.
To prove this result, they introduce the notion of a winding fraction for directed graphs.
For a survey of what is known about the Vietoris--Rips complexes of higher-dimensional spheres, see~\cite{ABV}.

We now turn our attention to the case when $X = E_a$ is an ellipse, which is the focus of this paper.
Some results on $\vr{E_a}{r}$ are known for small values of $r$ and $a$.
In~\cite{AAR}, Adamaszek, Adams, and Reddy investigate homotopy types of the Vietoris--Rips complexes of ellipses and find that the $1$-skeleton of $\vr{E_a}{r}$ can be modeled as a cyclic graph whenever $1 < a \leq \sqrt{2}$.
We say that an ellipse $E_a$ has \emph{small eccentricity} when $1 < a \leq \sqrt{2}$.
Theorem~\cite[Theorem~7.3]{AAR} states that if $E_a$ is an ellipse with small eccentricity, then
\[\vr{E_a}{r} \simeq
\begin{cases}
S^1 ~\text{for} ~0<r\leq r_1(a) \\
S^2 ~\text{for} ~r_1(a)<r\leq r_2(a),
\end{cases}\]
where $r_1(a) = \frac{4\sqrt{3}a}{a^2+3}$, and $r_2(a) = \frac{4\sqrt{3}a^2}{3a^2+1}$.
For a more in-depth treatment on the tools of cyclic graphs, cyclic dynamical systems, and winding fractions which we use in this paper, see~\cite{AAR}.

\section{Preliminaries}
\label{sec:preliminaries}

\subsection*{Metric spaces}

Let $(X,d)$ be a metric space with metric $d \colon X \times X \to \mathbb{R}_{\geq 0}$.

The \emph{diameter} of a finite subset $\sigma \subseteq X$ is $\text{diam}(\sigma) \coloneq \displaystyle \max_{x,y \in \sigma} d(x,y)$.

\subsection*{Simplicial complexes}

An \emph{abstract simplicial complex} is a pair $K=(V,E)$, where $V$ is a set of vertices, and $E$ is a collection of finite subsets of $V$ called simplices, such that if $\sigma\in E$ and $\tau\subseteq \sigma$, then $\tau\in E$.

Let $K=(V,E)$ be an abstract simplicial complex.
Then for any $p$-simplex $\sigma\in E$, we can define its \emph{geometric realization} $|\sigma|$ to be the convex hull of the standard unit vectors in $\R^{p+1}$, 
\[ |\sigma| = \left\{ \sum_{i=1}^{p+1} \alpha_i e_i \colon \alpha_i > 0, \sum_{i=1}^{p+1} \alpha_i = 1 \right\} \]
To define the geometric realization of the whole entire simplicial complex, we define $|K|$ to be the collection of all functions $\alpha:V\to [0,1]$, such that $\{v\in V~|~\alpha(v)\neq 0\}\in E$, and $\sum_{v\in V}\alpha(v)=1$.
We endow this set with the quotient topology inherited from $\coprod_{\sigma\in E}|\sigma|\to |K|$.
The definition of this topology is complicated so that it works even when the vertex set $V$ is infinite, but the idea is essentially to place barycentric coordinates on simplices.

Throughout the rest of this paper, we often identify abstract simplicial complexes with their geometric realizations.

\subsection*{Vietoris--Rips simplicial complexes}

Let $X$ be a metric space.
For scale parameter $r > 0$, the \emph{Vietoris--Rips simplicial complex $\vr{X}{r}$} consists of all finite subsets $\sigma \subseteq X$ with diam$(\sigma) < r$.

\subsection*{The circle}

The circle, denoted $S^1$, is thought of as the set of all unit vectors in $\R^2$, equipped with the subspace topology inherited from $\R^2$.
We occasionally parameterize $S^1$ using polar coordinates, referring to points $x\in S^1$ as $x=(\cos\theta, \sin\theta)$ for $\theta\in [0,2\pi)$.

Additionally, it is useful to define open and closed clockwise arcs on the circle.
We use 
$\preceq$ to denote the clockwise order that vertices appear on the ellipse.
In particular, we say $p_1 \preceq p_2 \preceq \ldots \preceq p_s \preceq p_1$ if $p_2$ appears clockwise from $p_1$ or is equal to $p_1$ in $S^1$, $p_3$ appears clockwise from $p_2$ or is equal to $p_2$, and so forth.
We denote the closed \emph{clockwise} arc on $S^1$ by $[p,q]_{S^1}=\{x\in S^1 ~:~p \preceq x \preceq q \preceq p\}$.
Open and half-open arcs are defined similarly and are denoted $(p,q)_{S^1}$, $[p,q)_{S^1}$, and $(p,q]_{S^1}$.
The directed clockwise distance on $S^1$ is defined by $\vec{d}:S^1\times S^1\to [0,2\pi)$, where $\vec{d}(p,q)$, where is the length of the closed clockwise arc from $p$ to $q$.
Note that $\vec{d}(p,q) \neq \vec{d}(q,p)$ for arbitrary $p\neq q$ and $p\neq -q$.

\subsection*{Ellipses}

For a real number $a > 1$, we define an \emph{ellipse} $E_a$ as
\[ E_a \coloneqq \{(x,y)\in \mathbb{R}^2 \colon (x/a)^2 + y^2 = 1\}\]
equipped with the restriction of the Euclidean metric on $\R^2$.
When $1 < a < \sqrt{2}$, we say the ellipse $E_a$ is of \emph{small eccentricity}.
Note that we can define a clockwise order on $E_a$ by saying $p\preceq q \preceq p$ for $p,q\in E_a$ if $\varphi^{-1}_a(p) \preceq \varphi^{-1}_a(q) \preceq \varphi^{-1}_{a}(p)$ in $S^1$.
We frequently use the family of homeomorphisms $\varphi_a:S^1\to E_a$ that map $\varphi_a:(\cos\theta, \sin\theta)\mapsto (a\cos\theta,\sin\theta)$.
Observe that $\varphi_a$ preserves the clockwise order on $S^1$ and $E_a$.
That is, if $x,y\in S^1$ satisfy $x\preceq y \preceq x$, then $\varphi_a(x) \preceq \varphi_a(y) \preceq \varphi_a(x)$.
Frequently, we define maps on the ellipse, but their domain and/or codomain is $S^1$.
Examples of this include the side length function $s_a : S^1 \rightarrow \R$ and the step function $F : S^1 \times (1, 2)_{\R} \times (1, \sqrt{2})_{\R}$ which we examine closesly in Section~\ref{sec:JointContF}.
In this case, assume the maps are composed with $\varphi_a$ or $\varphi_a^{-1}$ as needed.

We define the open, closed, and half-open clockwise arcs on $E_a$ analogously to how they are defined for $S^1$ and denote them $(p,q),~ [p,q],~(p,q],$ and $[p,q)$ for $p \preceq q \preceq p\in E_a$.

\subsection*{Directed graphs}

A \emph{homomorphism of directed graphs} $h : G \to \tilde{G}$ is a vertex map such that for every edge $v \to w$ in $G$, either $h(v) = h(w)$ or there is an edge $h(v) \to h(w)$ in $\tilde{G}$.
The \emph{out-neighborhood} of a vertex $v$ is defined as $N^+(G,v) = \{w~|~ v \to w\}$, and its closed version is $N^+[G,v] = \{w~|~ v \to w\}\cup\{v\}$.

\subsection*{Cyclic graphs}

A large part of the remaining preliminaries comes from~\cite{AAR}, where proofs may be found.
A directed graph with vertex set $V \subseteq S^1$ is \emph{cyclic} if whenever there is a directed edge $v \to u$, then there are directed edges $v \to w$ and $w \to u$ for all $v \prec w \prec u \prec v$.

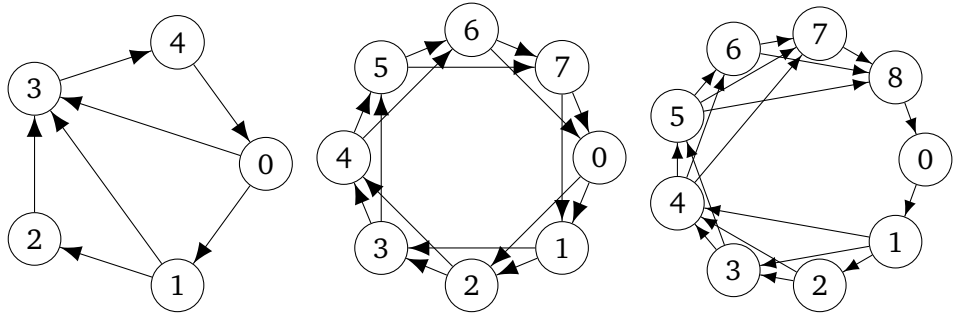
\begin{figure*}[h]
    \begin{center}
 \begin{tikzpicture}[
     decoration={
       markings,
       mark=at position 1 with {\arrow[scale=2,black]{latex}};
     },every node/.style={draw,circle}
   ]
\def \n {4}
\def \radius {3cm}
\def \margin {8} 
\foreach \s in {0,...,4}
{\draw (-72*\s: 1.7cm) node(\s){\s};}
    \draw [postaction=decorate] (0) -- (1);
    \draw [postaction=decorate] (0) -- (3);
    \draw [postaction=decorate] (1) -- (2);
    \draw [postaction=decorate] (1) -- (3);
    \draw [postaction=decorate] (2) -- (3);
    \draw [postaction=decorate] (3) -- (4);
    \draw [postaction=decorate] (4) -- (0);
\end{tikzpicture}
\hspace{1mm}
 \begin{tikzpicture}[
     decoration={
       markings,
       mark=at position 1 with {\arrow[scale=2,black]{latex}};
     },every node/.style={draw,circle}
   ]
\def \n {5}
\def \radius {3cm}
\def \margin {8} 
\foreach \s in {0,...,7}
{\draw (-45*\s: 1.7cm) node(\s){\s};}
    \draw [postaction=decorate] (0) -- (1);
    \draw [postaction=decorate] (0) -- (2);
    \draw [postaction=decorate] (1) -- (2);
    \draw [postaction=decorate] (1) -- (3);
    \draw [postaction=decorate] (2) -- (3);
    \draw [postaction=decorate] (2) -- (4);
    \draw [postaction=decorate] (3) -- (4);
    \draw [postaction=decorate] (3) -- (5);
    \draw [postaction=decorate] (4) -- (5);
    \draw [postaction=decorate] (4) -- (6);
    \draw [postaction=decorate] (5) -- (6);
    \draw [postaction=decorate] (5) -- (7);
    \draw [postaction=decorate] (6) -- (7);
    \draw [postaction=decorate] (6) -- (0);
    \draw [postaction=decorate] (7) -- (0);
    \draw [postaction=decorate] (7) -- (1);
\end{tikzpicture}
\hspace{1mm}
\begin{tikzpicture}[
     decoration={
       markings,
       mark=at position 1 with {\arrow[scale=1.5,black]{latex}};
     },every node/.style={draw,circle}
   ]
\def \n {5}
\def \radius {3cm}
\def \margin {8} 
\foreach \s in {0,...,8}
{\draw (-40*\s: 1.7cm) node(\s){\s};}
    \draw [postaction=decorate] (0) -- (1);
    \draw [postaction=decorate] (1) -- (2);
    \draw [postaction=decorate] (1) -- (3);
    \draw [postaction=decorate] (1) -- (4);
    \draw [postaction=decorate] (2) -- (3);
    \draw [postaction=decorate] (2) -- (4);
    \draw [postaction=decorate] (3) -- (4);
    \draw [postaction=decorate] (3) -- (5);
    \draw [postaction=decorate] (4) -- (5);
    \draw [postaction=decorate] (4) -- (6);
    \draw [postaction=decorate] (4) -- (7);
    \draw [postaction=decorate] (5) -- (6);
    \draw [postaction=decorate] (5) -- (7);
    \draw [postaction=decorate] (5) -- (8);
    \draw [postaction=decorate] (6) -- (7);
    \draw [postaction=decorate] (6) -- (8);
    \draw [postaction=decorate] (7) -- (8);
    \draw [postaction=decorate] (8) -- (0);
\end{tikzpicture}
\end{center}
\caption{(Left) Not a cyclic graph because there is an edge $0 \to 3$ but not $0 \to 2$.
(Middle) The cyclic graph $C_8^2$.
(Right) This cyclic graph has winding fraction $\frac{1}{5}$.}
\label{fig:cyclic}
\end{figure*}

For integers $n$ and $k$ with $0 \leq k \leq \frac{n}2$, the cyclic graph $C_n^k$ has vertex set {0,$\dots$, $n-1$} and edges $i \to (i+s) \mod n$ for all $i=0, \dots, n-1$ and $s = 0, \dots, k$

Let $G$ and $\tilde{G}$ be cyclic graphs.
A homomorphism $h \colon G \to \tilde{G}$ is \emph{cyclic} if
\begin{itemize}
\item whenever $v \prec w \prec u \prec v$ in $G$, then $h(v) \preceq h(w) \preceq h(u) \preceq h(v)$ in $\tilde{G}$, and
\item $h$ is not constant whenever $G$ has a directed cycle of edges.
\end{itemize}

We will use the following numerical invariant of cyclic graphs extensively throughout this paper.
The \emph{winding fraction} of a finite cyclic graph $G$ is \[\text{wf}(G) = \sup\left\{\tfrac{k}{n}~:~\text{there exists a cyclic homomorphism } C_n^k \to G\right\}.\]
The \emph{winding fraction} of a (possibly infinite) cyclic graph $G$ is 
\[\text{wf}(G) = \sup_{W\subseteq V \text{ finite}} \text{wf}(G[W]).\]
We say the \emph{supremum is attained} if there exists some finite $W$ with wf($G$) = wf($G[W]$).

\subsection*{Finite cyclic dynamical systems}

Let $G$ be a finite cyclic graph with vertex set $V \subseteq S^1$.
The associated \emph{finite cyclic dynamical system} is generated by the dynamics $f:V\to V$, where $f(v)$ is defined to be the clockwise most vertex of $N^+[G,v]$.

A vertex $v$ is \emph{periodic} if $f^i(v) = v$ for some $i$.
If $v$ is periodic then we refer to $\{f^i(v) | i\geq0\}$ as a \emph{periodic orbit}.
The \emph{length} $\ell$ of a periodic orbit is the smallest $i\geq1$ such that $f^i(v) = v$.
The \emph{winding number} of a periodic orbit is
\[ \omega \coloneqq \sum_{i=0}^{\ell-1}\vec{d}(f^i(v), f^{i+1}(v)) \]
Every periodic orbit in a finite cyclic graph has the same length $\ell$ and winding number $\omega$.
For a finite cyclic graph $G$, we have that $\wf(G) = \frac\omega{\ell}$.

\subsection*{Infinite cyclic graphs}

Let $G$ be a (possibly infinite) cyclic graph with vertex set $V \subseteq S^1$, and let $m \geq 1$.
We define the map $\gamma_m : V \to \mathbb{R}$ by 

\[\gamma_m(v_0) = \displaystyle \sup\left\{\sum_{i=0}^{m-1} \vec{d}(v_i, v_{i+1})~|~\text{there exist } v_0\to v_1 \to \cdots \to v_m \right\}.\]

We say a cyclic graph $G$ is \emph{closed} if its vertex set $V$ is closed in $S^1$.
We say $G$ is \emph{continuous} if $\gamma_m \colon V \to \mathbb{R}$ is continuous for all $m \geq 1$.
We say that the \emph{supremum is attained} if there exists a directed path $v_0\to v_1 \to \ldots \to v_m$ in $G$ such that
$$
\sum_{i=0}^{m-1}\vec{d}(v_i,v_{i+1})=\gamma_{m}(v_0).
$$
Note since $\gamma_m$ is defined in terms of a supremum, and $G$ is possibly infinite, there may not exist a vertex exactly $\gamma_m$ distance clockwise from $v \in V$ in general.
Hence, we define the map $f_m : V \to S^1$ to the circle rather than to $V$ by
\[ f_m(v) = (v + \gamma_m(v)) \mod 2\pi \]
where the sum is modulo the circumference of the circle.

If $N^+[G,v]$ is closed for all $v \in V$, then we can define a dynamical system for $G$ like for finite cyclic graphs, and $f_m(v) = f^m(v)$.

Now we briefly restrict our attention to the case where $\wf(G) = \frac{p}{q}$ is rational.
We say $v \in V$ is \emph{periodic} if there is some $i \geq 1$ such that $\gamma_i(v) \in \mathbb{N}$ and the supremum defining $\gamma_i(v)$ is achieved.
As in the finite case, the length $l$ of this periodic orbit is the smallest such $i$ and its winding number is $\gamma_l(v)$.
Let $G$ be cyclic with wf$(G) = \frac{p}{q}$.
If $v \in V$ is periodic, then $v$ has orbit length $q$ and winding number $\gamma_q(v) = p$.
Let $V_P \subseteq V$ be the set of all periodic points.
We say a non-periodic vertex $v \in V \backslash V_P$ is \emph{fast} if $\gamma_q(v) > p$, and slow if $\gamma_q(v) \leq p$.
When $G$ is infinite, it is possible for a non-periodic slow vertex to satisfy $\gamma_q(v) = p$.

A fast vertex $v \in V$ \emph{achieves periodicity} if there is some $m$ such that $f_m(v)$ is in $V$, $f_m(v)$ is periodic, and the supremum defining $\gamma_m(v)$ is obtained.
Otherwise we say $v$ is \emph{permanently fast}.

We will often associate a Vietoris--Rips complex to its $1$-skeleton, which is a graph.
In this case of ellipses of small eccentricity, orienting this graph clockwise makes it a cyclic graph.
We then use the machinery of cyclic graphs to analyze the homotopy types of Vietoris--Rips complexes of ellipses.
If $\wf(\vr{E_a}{r}) = \frac{l}{2l+1}$ and the supremum is attained for some $l\in \N$, then we call that Vietoris--Rips complex \emph{singular}.

\section{The side-length function and evidence for Conjecture~\ref{conj:main}}
\label{sec:sidelength}

As discussed in the introduction, we can define a side length function $s_a$ by sending a point $p \in E_a$ to the diameter of the star based at $p$.
The side length function will play a major role in our analysis, and so we take this time to define the function rigorously.
In the following, we note that there is a natural homeomorphism between ellipses and $S^1$.
Hence, fixing an eccentricity $a$, when we speak about a point $p\in S^1$ we are actually interpreting $p$ as a point on $E_a$.
We define a dynamical system $F_a:S^1\times (0,2)\to S^1$ which maps a point $p\in S^1$ and a given radius $r$ to the clockwise first point $q\in S^1$ such that the Euclidean distance on $E_a$ from $\varphi_a(p)$ to $\varphi_a(q)$ is equal to $r$.
For $i \geq 2$, we define $F^i_a$ recursively to be the $i$-fold composition of $F_a$.
That is, $F^i_a(p,r) = F_a (F^{i-1}_a(p,r),r)$ where $F^1_a=F_a$.
We refer to Section~\ref{sec:JointContF} for a full treatment on this dynamical system.

With this definition, we define the side length function $s_a\colon S^1\to \R$ to be
\[
s_a(p)=\inf\left\{r\in \R: \sum_{i=1}^5 \vec{d}(F^i_a(p,r),F^{i+1}_a(p,r)) \ge 4 \pi \right\}.
\]
We note that these definition can naturally be extended to allow for varying eccentricities.
We can thus define $F\colon S^1\times (0,2)\times (1,\sqrt{2})\to S^1$ by $F(p,r,a) = F_a(p,r)$.
Likewise, we have the side length function $s\colon S^1\times (1,\sqrt{2}) \to \R$ defined by $s(p,a) = s_a(p)$.
In Section~\ref{sec:JointContF}, we define this side length function in more generality, but since our current focus is on $5$-pointed stars wrapping twice around an ellipse, this current definition will suffice.

In Figure~\ref{fig:star-side-lengths}, we calculate $s_a(p)$ for $p$ in the first quadrant.
The global/local minima of these graphs correspond to the smallest diameters at which a point on the ellipse will be the base of a $5$-pointed star.
As such, the side length function helps us in determining the homotopy types of the Vietoris--Rips complex as the radius scale changes.

For Conjecture~\ref{conj:main} it sufficed to know the values of $r$ for which the step function with distance $r$ traveled at least twice around the ellipse in five steps.
By symmetry of the ellipse it sufficed to examine only the points in the first quadrant, which we represented by the angle $\theta$ of the point on the ellipse.

We do not find a closed formula for $s_a$, but we are able to closely approximate it using a simulation.
Our methodology for approximating $s_a(p)$ given $p \in S^1$ is as follows.
We know a priori that $0 \leq s_a(p) \leq 2$ since the minor axis of $E_a$ has length 2.
Then, using a black-box function which says how many loops of the ellipse are traveled by taking five steps from $p$ of length $r$, we perform a binary search on $0 \leq r \leq 2$ to find the minimum $r$ such that the number of loops traveled is two.
The black-box function approximates points of counterclockwise distance using a quartic solver.

Once we approximate the $s_a(p)$ for fixed $a$ and $p$, we find the global behavior by sampling evenly spaced points from the ellipse.
The resulting plots are in Figure~\ref{fig:star-side-lengths}.
From these plots, we determine the cardinality of $s_a^{-1}(r)$ at each $r$.
The cardinality of $s_a^{-1}(r)$ is important because if the winding fraction $\wf(X;r)$ is $\frac{l}{2l+1}$ for some integer $l$, then the homotopy type of $\vr{E_a}{r}$ is determined by the cardinality of $s_a^{-1}(r)$.
Previous work showed that the side length function of an inscribed triangle, not an inscribed star, had consistent behavior regardless of $a \in (1, \sqrt{2})$.
Specifically, the largest triangles are always based at the north and south poles, and the smallest triangles are always based at the east and west poles, and there are no other extrema present~\cite[Theorem~6.1]{AAR}.
In the present case of the $5$-pointed star, we surprisingly find that there are different behaviors of $s_a$ depending on the value of the eccentricity $a$.
We observe 5 different situations; see Figure~\ref{fig:star-side-lengths}.

\begin{itemize}
\item Situation 1: $s_a$ has ten global minima corresponding to the stars on the east and west poles, ten global maxima corresponding to the stars on the north and south poles, and no other extrema.
\item Situation 2: $s_a$ has ten global minima corresponding to the stars on the east and west poles, ten local minima corresponding to the stars on the north and south poles, and twenty global maxima not corresponding to stars on the poles.
\item Situation 3: $s_a$ has twenty global minima corresponding to the stars on the poles and twenty global maxima not corresponding to stars on the poles.
\item Situation 4: $s_a$ has ten local minima corresponding to the stars on the east and west poles, ten global minima corresponding to the stars on the north and south poles, and twenty global maxima not corresponding to stars on the poles.
\item Situation 5: $s_a$ has ten global maxima corresponding to the stars on the east and west poles, ten global minima corresponding to the stars on the north and south poles, and no other extrema.
\end{itemize}

\begin{figure}
\includegraphics[width=3.2in]{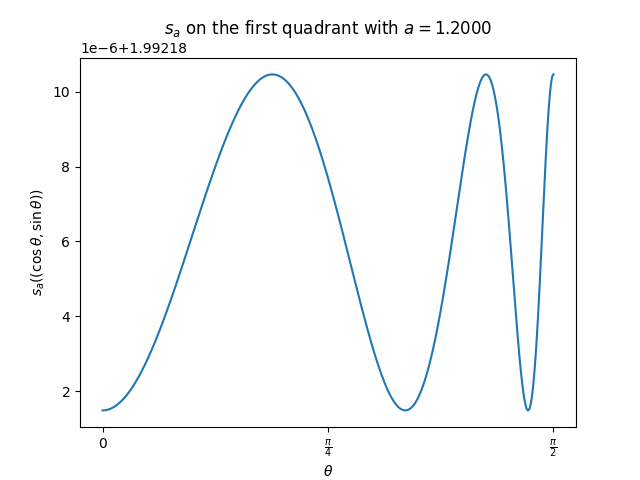}
\includegraphics[width=3.2in]{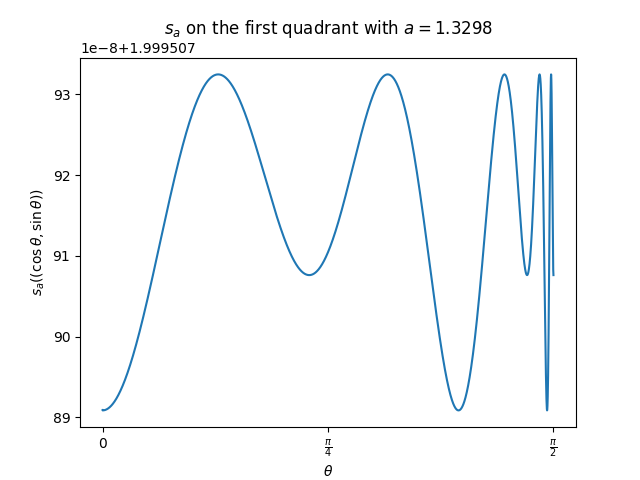}
\includegraphics[width=3.2in]{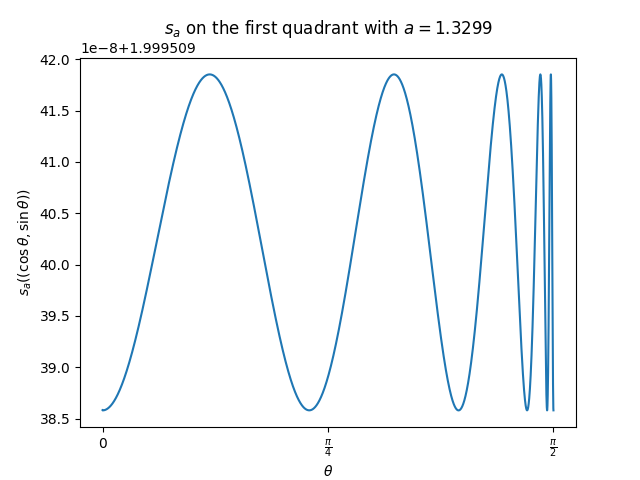}
\includegraphics[width=3.2in]{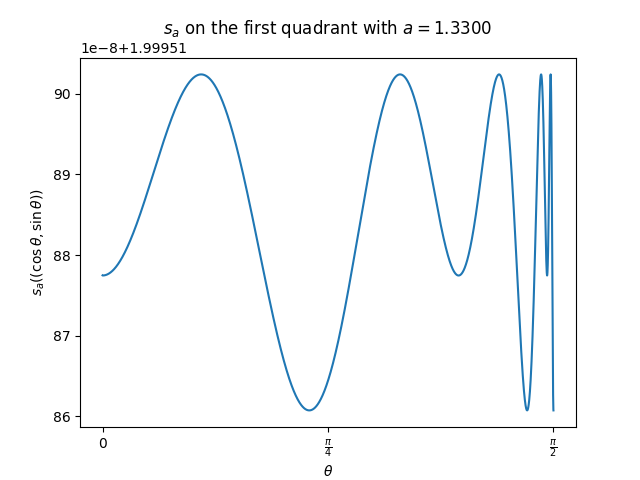}
\includegraphics[width=3.2in]{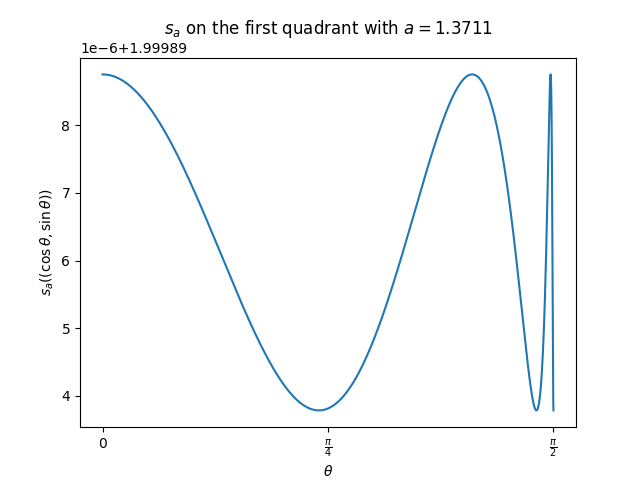}
\caption{
Each plot shows the value of $s_a$ on the portion of the circle in the first quadrant for a different fixed $a$.
In each case, 10000 points are sampled.
The 1-2-3-4-5 transition is observed in a neighborhood of $a = a_1 \approx 1.3299$.
From top left to bottom: Situation 1, $1 < a <a_1^-$; Situation 2, $a_1^- < a < a_1$; Situation 3, $a = a_1$; Situation 4, $a_1 < a < a_1^+$; Situation 5, $a_1^+ \le a \le a_2^-$.
}

\label{fig:star-side-lengths}
\end{figure}

Note that these five situations align with the different cases in Conjecture ~\ref{conj:main}.
Furthermore, we observed that as $a$ ranges through $(1, \sqrt{2})$, the function $s_a$ transitions between situations in the following order: 1-2-3-4-5-4-3-2-1.
Situations 1 and 5 are observed for the large majority of values of $a$.
The 5-4-3-2-1 portion of this transition can be found in a neighborhood of $a = a_2 \approx 1.4123$ at this link: 
(\footnote{https://github.com/nturillo/ellipse\textunderscore homotopy/blob/main/gifs/r\textunderscore vs\textunderscore theta\textunderscore a\textasciitilde1.4123.gif}).
In particular, by varying the eccentricity $a$ through $(1, \sqrt{2})$, the graphs like those in Figure~\ref{fig:star-side-lengths} suggest the following:
\begin{itemize}
    \item If $1 < a \le a_1^-$ or $a_1^+ \le a \le a_2^-$ or $a_2^+ \le a < \sqrt{2}$ then $s_a$ is in Situation 1 or 5.
    \item If $a = a_1$ or $a = a_2$ then $s_a$ is in Situation $3$
    \item If $a_1^- < a < a_1$ or $a_1 < a < a_1^+$ or $a_2^- < a < a_2$ or $a_2 < a < a_2^+$ then $s_a$ is in Situation 2 or 4.
\end{itemize} 
Note that each of these bullet points corresponds to a case in Theorem~\ref{thm:main}.

The case of $5$-pointed stars (winding fraction $\frac{2}{5}$) is more complicated than the case in~\cite{AAR} with triangles (winding fraction $\tfrac{1}{3}$), which is why we are unable to get exact formulas for the larger scales $r$ when the changes in the homotopy type of the Vietoris--Rips complex $\vr{E_a}{r}$ occur.
The first difference is that $3$-pointed stars in~\cite{AAR} are simply equilateral triangles so all angles and sides are the same, whereas here no conclusion can be drawn regarding angles of the 5-pointed star; see Figure~\ref{fig:minimal-star}.
The second is that the global behavior (such as the number of extrema) of $s_a$ is dependent on the eccentricity $a$ in the $5$-pointed star case, but not in the triangle case.

The repository of code we created to generate these approximations and graphs can be found on Github~\cite{Turillo2025ellipse_homotopy}.
Most of the code was written in Python, although a few computationally intensive numerical approximations were done with C++.


\section{Proof of Theorem~\ref{thm:extrema}}
\label{sec:extrema}

In this section we will prove Theorem~\ref{thm:extrema}, a partial result towards Conjecture~\ref{conj:main}.
Theorem~\ref{thm:extrema} states that there are eccentricities $a_1\approx 1.3299$ and $a_2\approx 1.4123$ such that $s_{a_1}$ and $s_{a_2}$ each have at least twenty local minima and twenty local maxima.
A high level overview of the proof is as follows.
Using elimination theory, we will calculate values of $s_a$ for specific points, namely the north and east poles of the circle, $(1, 0), (0,1) \in S^1$.
Then, we apply the intermediate value theorem to prove there exist two eccentricities, $a_1, a_2 \in (1, \sqrt{2})$ such that
\begin{align*}
    s_{a_1}((1, 0)) = s_{a_1}((0,1)) \\
    s_{a_2}((1, 0)) = s_{a_2}((0,1)) 
\end{align*}
In words, these are eccentricities for which the star on the ellipse $E_a$ based at $(a,0)$ has equal diameter to the star based at $(0,1)$.
Finally, we will show $(1,0),(0,1)\in S^1$ must be extrema of $s_a$, and since they have identical value, there must be more extrema lying between them.

With the plan of using the intermediate value theorem, we first state some lemmas about the continuity of $s_a$.
This proof will rely on the continuity of function $F$, which is proven in Section~\ref{sec:JointContF}.

\begin{lemma}
\label{lemma:continuity of s_a}
The functions $s_a: S^1 \rightarrow \R$ are continuous for all $a \in (1, \sqrt{2})$.
\end{lemma}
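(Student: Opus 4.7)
The plan is to express $s_a(p)$ as the unique level-set crossing of an auxiliary function and deduce continuity from joint continuity of the step map $F$, which is established in Section~\ref{sec:JointContF}. Define
\[
G_a(p,r) \;:=\; \sum_{i=1}^{5} \vec{d}\bigl(F_a^i(p,r),\,F_a^{i+1}(p,r)\bigr),
\]
so that by definition $s_a(p) = \inf\{r : G_a(p,r) \ge 4\pi\}$. I will show (i) $G_a$ is continuous on its domain, and (ii) for each fixed $p$, $G_a(p,\cdot)$ is strictly increasing in $r$; continuity of $s_a$ then follows by an elementary $\varepsilon$-$\delta$ argument.

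For (i), Section~\ref{sec:JointContF} provides joint continuity of $F$ in $(p,r,a)$, so each iterate $F_a^i(-,-)$ is continuous in $(p,r)$ by composition. The directed clockwise distance $\vec{d}$ is discontinuous only when its two arguments coincide, but consecutive iterates $F_a^i(p,r)$ and $F_a^{i+1}(p,r)$ are separated by a single clockwise step whose angular size varies continuously with $r$ and remains bounded away from $0$ and $2\pi$ on the relevant range of $r$. Hence each summand of $G_a$ is continuous, and so is $G_a$.

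For (ii), I would first show that $r \mapsto F_a(p,r)$ is strictly increasing in clockwise angular distance from $p$. Parameterizing $E_a$ via $\varphi_a$, the chord distance from $p$ to the point an angle $\alpha$ clockwise of $p$ can be written explicitly, and a short calculation (analogous to the one used in \cite{AAR} for inscribed triangles) shows this chord distance is strictly increasing in $\alpha$ on an interval $(0,\alpha_{\max})$ containing all relevant step sizes in the small-eccentricity regime. Its inverse, sending $r$ to the smallest clockwise $\alpha$ at chord distance $r$, is therefore strictly increasing in $r$. By induction on $i$, each iterate $F_a^i(p,r)$ advances strictly further clockwise as $r$ grows, so each summand of $G_a(p,\cdot)$ is strictly increasing, and therefore so is $G_a(p,\cdot)$.

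With (i) and (ii) in hand, for each fixed $p$ the continuous strictly increasing function $G_a(p,\cdot)$ attains the value $4\pi$ at a unique $r^* = s_a(p)$. Given $\varepsilon > 0$, strict monotonicity yields $G_a(p, s_a(p)-\varepsilon) < 4\pi < G_a(p, s_a(p)+\varepsilon)$, and joint continuity of $G_a$ propagates these strict inequalities to all $p'$ in a neighborhood of $p$, giving $|s_a(p') - s_a(p)| < \varepsilon$. The main obstacle I expect is establishing the uniform strict monotonicity in step (ii): while plausible in the regime $1 < a < \sqrt{2}$, it requires the explicit chord-length computation, and one must verify that step sizes stay within the range where the clockwise-first point at chord distance $r$ is uniquely and continuously defined in $r$.
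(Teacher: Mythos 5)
Your overall strategy mirrors the paper's: use joint continuity of the step map $F$ together with monotonicity in $r$ to squeeze $s_a(p')$ into $(s_a(p)-\varepsilon,\,s_a(p)+\varepsilon)$ for $p'$ near $p$. That is exactly the argument the paper runs, though it tracks only the position of $F^5_a(p,r)$ rather than the auxiliary sum $G_a$.

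There is, however, a genuine gap in your step (ii). You argue that since each iterate $F_a^i(p,r)$ advances strictly clockwise as $r$ grows, \emph{each summand} $\vec{d}(F_a^i(p,r),F_a^{i+1}(p,r))$ is strictly increasing. That implication is false: if both endpoints of a step move clockwise, the gap between them can shrink (e.g.\ $F^1$ moving from $p+1$ to $p+2$ while $F^2$ moves from $p+3$ to $p+3.5$ pushes both iterates forward yet decreases the second summand from $2$ to $1.5$). What you actually have is that the \emph{cumulative} clockwise distance --- the total of the five summands, equivalently the lifted position of $F^5_a(p,r)$ as a real number --- is strictly increasing in $r$, because each application of $F$ is order-preserving and $F_a(p,\cdot)$ itself is strictly increasing. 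The paper uses precisely this: it observes $F^5_a(p, s_a(p)) = p$ and that decreasing (resp.\ increasing) $r$ by $\varepsilon$ pushes $F^5_a(p,\cdot)$ strictly counter-clockwise (resp.\ clockwise) of $p$, then propagates this to a neighborhood of $p$ by continuity of $F^5_a$. If you replace ``each summand increases'' with ``the cumulative clockwise distance increases, because the clockwise lift of $F^5_a(p,\cdot)$ is strictly monotone,'' your argument closes. As a side benefit, arguing at the level of the position of $F^5_a$ also lets you avoid the fiddly business in your step (i) of checking that $\vec{d}$ is continuous on the pairs $(F^i,F^{i+1})$ --- the paper never needs continuity of $G_a$ at all, only continuity of $F^5_a$.
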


\begin{proof}
Recall that $s_a$ is equivalently defined as
    \[
s_a(p)=\inf\left\{r\in \R: \sum_{i=1}^5 \vec{d}(F^i_a(p,r),F^{i+1}_a(p,r)) \ge 2\right\}.
    \]
For convenience, let $f_a: S^1 \times (0,2) \rightarrow \R$ denote
\[
f_a(p, r) = \sum_{i=1}^5 \vec{d}(F^i_a(p,r),F^{i+1}_a(p,r)).
\]
Let $p \in S_1$, and let $\epsilon > 0$ be given.
We claim that there is an open neighborhood of $p$ whose image under $s_a$ lies in $(s_a(p) - \epsilon, s_a(p) + \epsilon)$.

Observe that $F^5_a(p, s_a(p)) = p$.
Hence, by monotonicity, $u\coloneqq F^5_a(p, s_a(p) - \epsilon)$ lies slightly counter-clockwise of $p$, and $v\coloneqq F^5_a(p, s_a(p) - \epsilon)$ lies slightly clockwise of $p$.
Then, by continuity of $F^5_a$ (see Section~\ref{sec:JointContF}), there is an open neighborhood $U$ of $p$ with $U \subseteq (u,v)_{S^1}$
such that $F^5_a(x, s_a(p) - \epsilon)\in(u,p)_{S^1}$ and $F^5_a(x, s_a(p) + \epsilon)\in (p,v)_{S^1}$ for all $x \in U$.

Let $s \in U$ lie clockwise of $p$.
Recall that $v$ is clockwise of $s$.
So by monotonicity of $F^5_a$, we have that $F^5_a(s, s_a(p) + \epsilon)$ lies clockwise of $s$.
Hence, $f_a(s, s_a(p) + \epsilon) > 2$ and so, $s_a(s) < s_a(p) + \epsilon$.
On the other hand, $F^5_a(s, s_a(p) - \epsilon)$ lies counter-clockwise of $p$ and hence counter-clockwise of $s$.
Thus, $f_a(s, s_a(p) - \epsilon) < 2$ and $s_a(s) > s_a(p) - \epsilon$.
A symmetric argument shows that if $s \in U$ lies counter-clockwise of $p$, then $s_a(s) \in (s_a(p) - \epsilon, s_a(p) + \epsilon)$.
Thus, $U$ is our desired open neighborhood and hence $s_a$ is continuous.
\end{proof}

\begin{lemma}
\label{lemma: continuity of s_N, s_E}
The functions $s_N\colon (1, \sqrt{2}) \rightarrow \R$ given by $s_N(a) = s_a((0, 1))$ and $s_E\colon (1, \sqrt{2}) \rightarrow \R$ given by $s_E(a) = s_a((1,0))$ are continuous.
\end{lemma}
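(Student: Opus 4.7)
The plan is to mimic the argument of Lemma~\ref{lemma:continuity of s_a}, but now holding the base point $p_0 \in \{(0,1), (1,0)\} \subseteq S^1$ fixed and varying $a$ instead. The key input is the joint continuity of $F\colon S^1 \times (0,2) \times (1,\sqrt{2}) \to S^1$ (and hence of $F^5$) in all three arguments, which is established in Section~\ref{sec:JointContF}. Since the cases of $s_N$ and $s_E$ are symmetric, I would write out the details for $s_N$ and indicate that the argument for $s_E$ is identical.

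Fix $a_0 \in (1,\sqrt{2})$, let $p_0 = (0,1) \in S^1$, set $r_0 = s_{a_0}(p_0)$, and let $\epsilon > 0$ be given (shrunk if necessary so that $r_0 \pm \epsilon$ stay in the range of valid step sizes, e.g.\ in $(0,2)$). From the definition of $s_{a_0}$ and the monotonicity of $F^5_{a_0}(p_0,\cdot)$ in $r$, the point $u := F^5_{a_0}(p_0, r_0 - \epsilon)$ lies strictly counterclockwise of $p_0$ and $v := F^5_{a_0}(p_0, r_0 + \epsilon)$ lies strictly clockwise of $p_0$, because decreasing (resp.\ increasing) the step length forces the fifth iterate to undershoot (resp.\ overshoot) the double wrap. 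This is precisely the same set-up used in the previous lemma.

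Next, I would invoke the joint continuity of $F^5$ in the third variable $a$ at the two points $(p_0, r_0 - \epsilon, a_0)$ and $(p_0, r_0 + \epsilon, a_0)$. Because $u$ lies in the open clockwise arc counterclockwise of $p_0$ and $v$ lies clockwise of $p_0$, there is an open interval $J \subseteq (1,\sqrt{2})$ containing $a_0$ such that for every $a \in J$,
\[
F^5_{a}(p_0, r_0 - \epsilon) \in (u, p_0)_{S^1} \qquad \text{and} \qquad F^5_{a}(p_0, r_0 + \epsilon) \in (p_0, v)_{S^1}.
\]
For such $a$, the total clockwise displacement of the five-step orbit based at $p_0$ with step size $r_0 - \epsilon$ is less than $4\pi$, while with step size $r_0 + \epsilon$ it exceeds $4\pi$. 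By the definition of $s_a$ as an infimum, this forces $r_0 - \epsilon < s_a(p_0) < r_0 + \epsilon$, i.e.\ $|s_N(a) - s_N(a_0)| < \epsilon$. Hence $s_N$ is continuous at $a_0$, and since $a_0$ was arbitrary, $s_N$ is continuous on $(1,\sqrt{2})$. The identical argument with $p_0 = (1,0)$ proves continuity of $s_E$.

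The main (and only nontrivial) obstacle is ensuring that the continuity of $F^5$ used here is genuinely \emph{joint} in $(p,r,a)$, not merely separate continuity in each slot. This is not obvious since $F_a$ is defined by a clockwise infimum over points on an ellipse whose shape itself depends on $a$; points can in principle appear or disappear under small perturbations of $a$. However, this is exactly the content of the technical results in Section~\ref{sec:JointContF}, so once those are in hand the argument above goes through cleanly.
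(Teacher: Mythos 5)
Your proposal is essentially the same proof as in the paper: fix $a_0$ and $\epsilon$, observe that $F^5$ evaluated at $r_0-\epsilon$ undershoots and at $r_0+\epsilon$ overshoots, and then use continuity of $F^5$ in the eccentricity variable to find a neighborhood $J$ of $a_0$ on which these undershoot/overshoot relations persist, forcing $s_N(a)\in(r_0-\epsilon,r_0+\epsilon)$ for $a\in J$. The paper packages this as an intersection $U_1\cap U_2$ of two neighborhoods, one per endpoint; you use a single $J$, which is the same thing.

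One small remark: you stress that \emph{joint} continuity of $F^5$ is the crucial and only nontrivial ingredient. In fact, for this particular lemma the base point $p_0$ and the two radii $r_0\pm\epsilon$ are held fixed and only $a$ varies, so all that is needed here is continuity of $F^5(p_0, r_0\pm\epsilon, -)$ as a function of $a$ alone (Lemma~\ref{ray:FcontIna} together with the monotonicity arguments there suffice). Joint continuity is of course sufficient and is proved in Section~\ref{sec:JointContF} because it is needed elsewhere (e.g.\ in the proof of Lemma~\ref{lemma:continuity of s_a}, where the base point varies), but invoking it here is slightly more than the argument requires. Also note your orientation convention (smaller step $\Rightarrow$ endpoint counterclockwise of $p_0$) matches that of Lemma~\ref{lemma:continuity of s_a} and is the correct one; the paper's statement of the orientation in this lemma appears to have the words ``clockwise'' and ``counter-clockwise'' swapped, though the inequalities drawn from it ($f_b(p,s_N(a)-\epsilon)<2$, $f_b(p,s_N(a)+\epsilon)>2$) are consistent with your version.
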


\begin{proof}
Our proof here is much like that of Lemma~\ref{lemma:continuity of s_a}, albeit a little simpler.
Again consider the infimum definition of $s_a$, and let $f_a : S^1 \times (0,1)$ denote the sum in the infimum definition.
Let $a \in (1, \sqrt{2})$ and let $\epsilon > 0$ be given.
Fix $p = (0,1) \in S^1$.
We claim there is an open neighborhood of $a$ whose image under $s_N$ lies in $(s_N(a) - \epsilon, s_N(a) + \epsilon)$.

Again refer to Section~\ref{sec:JointContF} for details on $F$.
Observe that $F^5(p, s_N(a), a) = p$.
By monotonicity of $F$, we have that $F^5(p, s_N(a) - \epsilon, a)$ lies slightly clockwise of $p$.
Hence, by continuity of $F$, there is an open neighborhood $U_1$ of $a$ such that $F^5(p, s_N(a) - \epsilon, b)$ lies clockwise of $p$ for all $b \in U_1$.
Likewise, there is an open neighborhood $U_2$ of $a$ such that $F^5(p, s_N(a) + \epsilon, b)$ lies counter-clockwise of $p$ for all $b \in U_2$.
Set $U = U_1 \cap U_2$.
Then, if $b \in U$, we have
\begin{align*}
F^5(p, s_N(a) - \epsilon, b) &\text{ is clockwise of } p\\
F^5(p, s_N(a) + \epsilon, b) &\text{ is counter-clockwise of } p
\end{align*}
so that $f_b(p, s_N(a) - \epsilon) < 2$ and $f_b(p, s_N(a) + \epsilon) > 2$.
Hence, we have $s_N(b) \in (s_N(a) - \epsilon, s_N(a) + \epsilon)$, as desired.
The proof for $s_E$ is identical.
\end{proof}

Here, we use $N$ and $E$ to refer to the "north" and "east" poles of the ellipse, i.e.\ the points $(0,1)$ and $(a, 0)$, respectively.
We note that by symmetry, the "south" and "west" poles would have worked just as well.
We also need the following technical lemma.

\begin{lemma}
\label{lemma: stars > triangles}
Fix $a \in (1, \sqrt{2})$.
The diameter of any (5-pointed equilateral) star inscribed in $E_a$ is greater than the diameter of any triangle inscribed in $E_a$.
\end{lemma}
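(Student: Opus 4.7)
The plan is to deduce the lemma from a winding-fraction comparison, leveraging the AAR classification of $\vr{E_a}{r}$ at triangle scales.

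First, I would show that any inscribed $5$-pointed star $S = (p_0, \ldots, p_4)$ with diameter $d$ yields, for every scale $r > d$, a cyclic subgraph isomorphic to $C_5^2$ in the $1$-skeleton of $\vr{E_a}{r}$. The five skip-one chords $p_ip_{i+2}$ are edges because their common length $d$ satisfies $d < r$, and the cyclic property of Vietoris--Rips complexes on $E_a$ for $a \in (1, \sqrt{2})$ established in~\cite{AAR} then forces the intermediate edges $p_i \to p_{i+1}$ to be present as well. A cyclic homomorphism $C_5^2 \to \vr{E_a}{r}$ then gives $\wf(\vr{E_a}{r}) \geq \tfrac{2}{5}$ for every $r > d$.

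Next, I would combine this with~\cite[Theorem~7.3]{AAR}, which gives $\vr{E_a}{r} \simeq S^2$ for every $r \in (r_1(a), r_2(a)]$, and via the cyclic-graph homotopy classification pins $\wf(\vr{E_a}{r}) = \tfrac{1}{3}$ throughout this range. If $d \leq r_2(a)$, then choosing any $r \in (d, r_2(a)]$ would yield the contradiction $\tfrac{2}{5} \leq \wf(\vr{E_a}{r}) = \tfrac{1}{3}$. Hence $d > r_2(a)$, which is precisely the maximum side length of inscribed equilateral triangles in $E_a$.

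The main obstacle is the borderline case $d = r_2(a)$, in which the interval $(d, r_2(a)]$ is empty. Excluding this requires a separate argument: one route uses real-analyticity of both the star-diameter function $s_a$ and of $r_2(a)$ in the parameter $a$, combined with the strict inequality at the endpoint $a = 1$ (where $2\sin(\tfrac{2\pi}{5}) > \sqrt{3}$), propagated via continuity and analytic continuation over $(1, \sqrt{2})$ to rule out any intermediate $a$ for which $\min_p s_a(p) = r_2(a)$. Alternatively, one can argue directly at the level of the Vietoris--Rips complex: the open-inequality convention $\diam(\sigma) < r$ means that neither the star nor the maximum equilateral triangle is fully present in $\vr{E_a}{r_2(a)}$, so a simultaneous realization of both at the same scale can be ruled out by comparing the chord lengths via the formula $|pq|^2 = 4\sin^2(\Delta/2)(1 + (a^2-1)\sin^2\phi)$ together with the equi-chord and wrapping-twice constraints.
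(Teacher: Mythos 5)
Your overall strategy --- a winding-fraction contradiction between $2/5$ for stars and $1/3$ for triangles --- is the same one the paper uses, but your setup of the comparison creates a genuine gap. Demanding a scale $r$ with $d < r \le r_2(a)$ so that the five long chords become actual edges of $\vr{E_a}{r}$ only yields $d \ge r_2(a)$, and you rightly flag $d = r_2(a)$ as unresolved. The paper avoids this by arguing directly at the common diameter $r$: taking $s$ a vertex of the star and $t$ a vertex of the triangle, it bounds $2\lfloor m/5\rfloor \le \gamma_m(s) \le 2\lceil m/5\rceil$ and $\lfloor m/3\rfloor \le \gamma_m(t) \le \lceil m/3\rceil$, so $\gamma_m(s)/m \to 2/5$ while $\gamma_m(t)/m \to 1/3$; since \cite[Lemma~5.2]{AAR} identifies both limits with $\wf(\vr{E_a}{r})$, that is a contradiction. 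Because $\gamma_m$ is defined as a supremum over paths in the open-ball complex, this argument does not need the star's or triangle's chords to actually be edges at scale $r$, which is exactly what makes the borderline case go through.

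Neither of your proposed patches for $d = r_2(a)$ works as stated. Analyticity of $f(a) = \min_p s_a(p) - r_2(a)$ together with $f(1^+) > 0$ does not forbid an isolated zero of $f$ in $(1,\sqrt{2})$; analytic continuation rules out identical vanishing on an open set, not isolated zeros, so it cannot propagate a strict inequality. The ``direct'' chord-length route is too schematic to evaluate; it would still have to explain why a $5$-pointed star and the maximal inscribed triangle cannot share the critical diameter $r_2(a)$, and the quoted chord-length formula alone does not visibly deliver that. Replacing your $C_5^2$-subgraph step at a strictly larger scale with the $\gamma_m$ asymptotics at scale $r = d$ closes the gap and essentially recovers the paper's proof.
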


\begin{proof}
Suppose not.
Then, there exists an ellipse $E_a$ and a diameter $r > 0$ such that there exists both an inscribed star and inscribed triangle in $E_a$ of diameter $r$.
Consider the (infinite) cyclic graph $\vr{E_a}{r}$.
Let $s \in E_a$ be a point on a star of diameter $r$ and $t \in E_a$ be a point on a triangle of diameter $r$.
Then, $2 \lfloor \frac{m}{5} \rfloor \leq \gamma_{m}(s) \leq 2 \lceil \frac{m}{5} \rceil$ for every $m \in \N$, and so we have
    \[
        \lim_{m \rightarrow \infty} \frac{\gamma_m(s)}{m} = \frac{2}{5}
    \]
Likewise, $\lfloor \frac{m}{3} \rfloor \leq \gamma_m(t) \leq \lceil \frac{m}{3} \rceil$ for every $m \in \N$, and hence
    \[
        \lim_{m \rightarrow \infty} \frac{\gamma_m(t)}{m} = \frac{1}{3}
    \]
But, by~\cite[Lemma~5.2]{AAR}, we have
    \[
        \lim_{m \rightarrow \infty} \frac{\gamma_m(t)}{m} = \wf (\vr{E_a}{r}) = \lim_{m \rightarrow \infty} \frac{\gamma_m(s)}{m},
    \]
a contradiction since the left-hand and right-hand terms above are not equal.
\end{proof}

Now, we will invoke some computational tools from SageMath to find the desired eccentricities $a_1$ and $a_2$.

\begin{lemma}
\label{lemma:(3)}
There exist $a_1 \in (1.32, 1.34)$ and $a_2 \in (1.4, 1.414)$ such that $s_N(a_1) = s_E(a_1)$ and $s_N(a_2) = s_E(a_2)$.
\end{lemma}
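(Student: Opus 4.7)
The plan is to apply the Intermediate Value Theorem to the auxiliary function $g\colon (1,\sqrt{2}) \to \R$ defined by $g(a) = s_N(a) - s_E(a)$. By Lemma~\ref{lemma: continuity of s_N, s_E}, $g$ is continuous on $(1,\sqrt{2})$, so it suffices to exhibit sign changes of $g$ inside each of the intervals $(1.32, 1.34)$ and $(1.4, 1.414)$. To do this, we need to pin down $s_E(a)$ and $s_N(a)$ precisely at the four endpoints $a \in \{1.32,\, 1.34,\, 1.4,\, 1.414\}$, which is where elimination theory enters.

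The first step is to exploit the reflective symmetry of $E_a$ across the $x$-axis to set up polynomial systems for $s_E(a)$ and $s_N(a)$. The unique $5$-pointed star wrapping twice around $E_a$ and based at the east pole $(a,0)$ is symmetric across the $x$-axis, so its vertices, parameterized via $\varphi_a(\cos\theta,\sin\theta)$, occur at angles $0,\ \theta_1,\ \theta_2,\ -\theta_2,\ -\theta_1$ for some $0 < \theta_1 < \theta_2 < \pi$. The equal side-length condition (the three chord distances $\|p_0 - p_2\|$, $\|p_1 - p_3\|$, $\|p_2 - p_4\|$ are all equal) together with the wrap-twice condition produces a system of polynomial equations in the variables $c_i = \cos\theta_i$, $s_i = \sin\theta_i$ subject to $c_i^2 + s_i^2 = 1$, with $r = s_E(a)$ as the common chord length. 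The same construction based at the north pole $(0,1)$ gives an analogous system whose solution determines $s_N(a)$.

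The second step is to use SageMath's Gröbner basis or resultant routines to eliminate the angle variables from each system, producing univariate polynomials $P_E(a, r) = 0$ and $P_N(a, r) = 0$ with integer coefficients whose appropriate real roots are $r = s_E(a)$ and $r = s_N(a)$. For each of the four values $a \in \{1.32,\, 1.34,\, 1.4,\, 1.414\}$, we then isolate the relevant real roots to high precision to obtain rigorous numerical enclosures of $s_E(a)$ and $s_N(a)$, and hence sign-certified values of $g(a)$. To guarantee that we are selecting the \emph{correct} root of each polynomial (and not, say, a root corresponding to a triangle or to a different winding configuration), we use Lemma~\ref{lemma: stars > triangles} to rule out triangle side-lengths and use crude numerical bounds to rule out other spurious solutions of the algebraic system.

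The final step is routine: assuming the computation shows $g(1.32)$ and $g(1.34)$ have opposite signs, and likewise $g(1.4)$ and $g(1.414)$ have opposite signs, the IVT produces $a_1 \in (1.32, 1.34)$ and $a_2 \in (1.4, 1.414)$ with $g(a_1) = g(a_2) = 0$, as required. The main obstacle is genuinely computational: the elimination can produce very high degree polynomials with enormous integer coefficients, and we must make sure we identify the geometrically meaningful roots. Once the Gröbner basis computation is carried out and the root selection is justified, the rest of the argument is a straightforward application of continuity and IVT.
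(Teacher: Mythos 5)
Your proposal is correct and follows essentially the same route as the paper: define $D(a)=s_N(a)-s_E(a)$, use symmetry to restrict the polynomial system for north- and east-pole stars, eliminate down to bivariate polynomials $P_N(a,r)$ and $P_E(a,r)$ in SageMath, certify signs of $D$ at the four endpoints by rigorous root isolation (filtering out the triangle root via Lemma~\ref{lemma: stars > triangles}), and conclude by IVT using Lemma~\ref{lemma: continuity of s_N, s_E}. The only cosmetic differences are that the paper sets up the system directly in Cartesian coordinates $(x_i,y_i)$ rather than in $\cos\theta_i,\sin\theta_i$, and it does not try to encode the ``wrap twice'' condition as a polynomial equation (it cannot be) but instead, exactly as you also suggest at the end, relies on filtering spurious real roots after elimination.
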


Before the proof, we remark that empirical evidence suggests $a_1 \approx 1.3299$ and $a_2 \approx 1.4123$.

\begin{proof}
\label{proof:(3)}
    We first define a function $D \colon (1, \sqrt{2}) \rightarrow \R$ by $D(a) = s_N(a) - s_E(a)$.
    We will show that $D(1.32) > 0$ and $D(1.34) < 0$, and thus there is $a_1$ such that $D(a_1) = 0$, proving the result for $a_1$.
    Similarly, we will show $D(1.4) < 0$ and $D(1.414) > 0$.
    Our main tool will be two polynomials $P_N, P_E$ in $a$ and $r$ whose zeroes represent the diameters of the north pole stars and east pole stars, respectively.
    To construct these polynomials, we begin with two systems of equations in twelve variables: $a$, the eccentricity as usual; $r$, the diameter of a star; and ten variables corresponding to the 2 dimensional coordinates of each of the five points of the star.
    First, we describe a system of equations whose solution set encodes five pointed stars in the ellipse.
    \begin{equation} \label{eq:star}
    \begin{aligned}
        (x_0/a)^2 + y_0^2 = 0&, \quad (x_1 - x_0)^2 + (y_1 - y_0)^2 - r^2 = 0,\\
        (x_1/a)^2 + y_1^2 = 0&, \quad (x_2 - x_1)^2 + (y_2 - y_1)^2 - r^2 = 0,\\
        (x_2/a)^2 + y_2^2 = 0&, \quad (x_3 - x_2)^2 + (y_3 - y_2)^2 - r^2 = 0,\\
        (x_3/a)^2 + y_3^2 = 0&, \quad (x_4 - x_3)^2 + (y_4 - y_3)^2 - r^2 = 0,\\
        (x_4/a)^2 + y_4^2 = 0&, \quad (x_0 - x_4)^2 + (y_0 - y_4)^2 - r^2 = 0,\\
    \end{aligned}
    \end{equation}
    The equations on the left side encode the requirement that each point of the star lies on the ellipse.
    The equations on the right side encode the requirement that the star is equilateral with diameter $r$.
    The solution space to~\eqref{eq:star} is too large to reduce to a polynomial using SageMath.
    Hence, we reduce the space to only north pole or only east pole stars using the following two sets of equations.
    \begin{equation}\label{eq:northsym}
    \begin{aligned}
        y_1 - y_4 = 0&, \quad x_1 + x_4 = 0\\
        y_2 - y_3 = 0&, \quad x_2 + x_3 = 0\\
        x_0 = 0&, \quad y_0 - 1 = 0
    \end{aligned}
\end{equation}
\begin{equation}
\label{eq:eastsym}
    \begin{aligned}
        y_1 + y_4 = 0&, \quad x_1 - x_4 = 0\\
        y_2 + y_3 = 0&, \quad x_2 - x_3 = 0\\
        x_0 - a = 0&, \quad y_0 = 0
    \end{aligned}
\end{equation}
    These sets of equations restrict the first point of the star to be at the corresponding pole, and also ensure the additional symmetries present in either case.
    Using the elimination theory tools in SageMath, we can reduce our systems to two singular polynomial equations in $a$ and $r$.
    Here, $P_N$ is constructed from~\eqref{eq:star} and~\eqref{eq:northsym}, while $P_E$ is likewise constructed from~\eqref{eq:star} and~\eqref{eq:eastsym}.
    \begin{align*}
    P_N(a,r) &=  50625 a^{24} r^{14} - 432000 a^{24} r^{12} + 40500 a^{22} r^{14} - 576000 a^{24} r^{10} + 1728000 a^{22} r^{12} \\
    & - 273150 a^{20} r^{14} + 10076160 a^{24} r^{8} - 14929920 a^{22} r^{10} + 2593920 a^{20} r^{12} - 63900 a^{18} r^{14} \\
    & - 983040 a^{24} r^{6} - 1474560 a^{22} r^{8} + 14059520 a^{20} r^{10} - 4603904 a^{18} r^{12} + 488815 a^{16} r^{14} \\
    & - 62914560 a^{24} r^{4} + 161218560 a^{22} r^{6} - 128696320 a^{20} r^{8} + 35885056 a^{18} r^{10} - 3518208 a^{16} r^{12} \\
    & - 91800 a^{14} r^{14} - 125829120 a^{22} r^{4} + 172359680 a^{20} r^{6} - 42663936 a^{18} r^{8} - 8330240 a^{16} r^{10} \\
    & + 3914752 a^{14} r^{12} - 201956 a^{12} r^{14} - 62914560 a^{20} r^{4} + 14417920 a^{18} r^{6} + 23478272 a^{16} r^{8} \\
    & - 9973760 a^{14} r^{10} + 703744 a^{12} r^{12} + 5480 a^{10} r^{14} - 11468800 a^{16} r^{6} + 5865472 a^{14} r^{8} \\
    & + 247808 a^{12} r^{10} - 274432 a^{10} r^{12} + 34991 a^{8} r^{14} - 802816 a^{12} r^{8} + 356352 a^{10} r^{10} \\
    & - 100736 a^{8} r^{12} + 9316 a^{6} r^{14} + 38400 a^{8} r^{10} - 10752 a^{6} r^{12} + 1026 a^{4} r^{14} - 384 a^{4} r^{12} \\
    & + 52 a^{2} r^{14} + r^{14}
    \end{align*}
    \begin{align*}
    P_E(a,r) &= a^{24} r^{14} + 52 a^{22} r^{14} - 384 a^{22} r^{12} + 1026 a^{20} r^{14} - 10752 a^{20} r^{12} \\
    & + 9316 a^{18} r^{14} + 38400 a^{20} r^{10} - 100736 a^{18} r^{12} + 34991 a^{16} r^{14} + 356352 a^{18} r^{10} \\
    & - 274432 a^{16} r^{12} + 5480 a^{14} r^{14} - 802816 a^{18} r^{8} + 247808 a^{16} r^{10} + 703744 a^{14} r^{12} \\
    & - 201956 a^{12} r^{14} + 5865472 a^{16} r^{8} - 9973760 a^{14} r^{10} + 3914752 a^{12} r^{12} - 91800 a^{10} r^{14} \\
    & - 11468800 a^{16} r^{6} + 23478272 a^{14} r^{8} - 8330240 a^{12} r^{10} - 3518208 a^{10} r^{12} + 488815 a^{8} r^{14} \\
    & + 14417920 a^{14} r^{6} - 42663936 a^{12} r^{8} + 35885056 a^{10} r^{10} - 4603904 a^{8} r^{12} - 63900 a^{6} r^{14} \\
    & - 62914560 a^{14} r^{4} + 172359680 a^{12} r^{6} - 128696320 a^{10} r^{8} + 14059520 a^{8} r^{10} + 2593920 a^{6} r^{12} \\
    & - 273150 a^{4} r^{14} - 125829120 a^{12} r^{4} + 161218560 a^{10} r^{6} - 1474560 a^{8} r^{8} - 14929920 a^{6} r^{10} \\
    & + 1728000 a^{4} r^{12} + 40500 a^{2} r^{14} - 62914560 a^{10} r^{4} - 983040 a^{8} r^{6} + 10076160 a^{6} r^{8} \\
    & - 576000 a^{4} r^{10} - 432000 a^{2} r^{12} + 50625 r^{14}
    \end{align*}
    For a given $a_0 \in (1, \sqrt{2})$, if the corresponding north pole star has diameter $r_0$, then $a_0, r_0$ are a solution to $P_N$.
    A similar statement is true for an east pole star and $P_E$.
    Hence, to find the diameters of north and east pole stars, we can examine the roots of $P_N$ and $P_E$ for specific values of $a$.
    This is how we will find $D(1.32)$ and $D(1.34)$.
    Still using SageMath, we find the real roots of $P_N$ and $P_E$ numerically when $a = 1.32$ and when $a = 1.34$.
    One potential trouble is that a triangle is actually a valid solution to our system of equations.
    We know the diameter of a maximal triangle in $E_a$ is
    \begin{equation*}
    r_2(a)=\frac{4\sqrt{3}a^2}{3a^2+1}
    \end{equation*}
    Since by Lemma~\ref{lemma: stars > triangles} the diameter of a minimal 5-pointed star must be strictly greater than that of a maximal triangle, we can filter out roots not greater than $r_2(a)$.
    Doing so, we find that the only real roots of $P_N$ and $P_E$ greater than $r_2(a)$ when $a = 1.32$ are approximately
    \[1.99934602760558, \text{ and } 1.99934434212106,\]
    respectively.
    Hence, 
    \[D(1.32) \approx 1.68548452283979 \cdot 10^{-6}\]
    One might worry that the sign of $D(1.32)$ could be incorrect due to floating-point precision error.
    However, we actually use the \texttt{max\_diameter} argument of SageMath's \texttt{real\_roots} function to find a rational interval in which $D(1.32)$ is guaranteed to lie.
    Both endpoints of this interval are positive, so $D(1.32) > 0$.
    Using the same process, we find
    \begin{align*}
        D(1.34) \approx -1.62962280314538 \cdot 10^{-6} \\
        D(1.4) \approx -3.13760593217971 \cdot 10^{-6} \\
        D(1.414) \approx 5.834802545567896 \cdot 10^{-7} 
    \end{align*}

By Lemma~\ref{lemma: continuity of s_N, s_E}, the function $D$ is continuous.
So, by the intermediate value theorem, there are $a_1 \in (1.32, 1.34)$, $a_2 \in (1.4, 1.414)$ such that $D(a_1) = D(a_2) = 0$.
Then, $s_N(a_1) = s_E(a_1)$ and $s_N(a_2) = s_E(a_2)$, as desired.

\end{proof}

\begin{lemma}
\label{lemma: poles are extrema}
The points $(\pm 1, 0)$ and $(0, \pm 1)$ are extrema of $s_a$ for all $a \in (0, \sqrt{2})$.
\end{lemma}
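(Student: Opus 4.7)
The approach is to use the reflection symmetries of the ellipse. The two nontrivial isometric involutions of $E_a$---reflection across the $x$-axis and reflection across the $y$-axis---pull back via $\varphi_a$ to involutions $\sigma_x, \sigma_y$ of $S^1$ given by $(\cos\theta,\sin\theta)\mapsto(\cos\theta,-\sin\theta)$ and $(\cos\theta,\sin\theta)\mapsto(-\cos\theta,\sin\theta)$, respectively. The fixed points of $\sigma_x$ on $S^1$ are $(\pm 1, 0)$ and the fixed points of $\sigma_y$ are $(0, \pm 1)$, so every pole is fixed by exactly one of these involutions.

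First I would show that $s_a\circ\sigma = s_a$ for each $\sigma\in\{\sigma_x,\sigma_y\}$. Given the $5$-pointed star $(p_0, p_1, p_2, p_3, p_4)$ wrapping twice around $E_a$ based at $p_0$, the tuple $(\sigma(p_0), \sigma(p_4), \sigma(p_3), \sigma(p_2), \sigma(p_1))$ has the same pairwise chord lengths since $\sigma$ is a Euclidean isometry, and it is clockwise-ordered on $E_a$ (reading the $\sigma$-image in reverse compensates for the orientation reversal of $\sigma$). It also wraps twice, since the total winding is preserved. By uniqueness of the wrapping-twice star at each base point, this is the star at $\sigma(p_0)$, so $s_a(\sigma(p_0)) = s_a(p_0)$. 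Applying this with $p$ a fixed point of $\sigma$ and $q$ in a small arc-neighborhood of $p$, we get $s_a(q) = s_a(\sigma(q))$ with $\sigma(q)$ lying on the arc opposite to $q$ relative to $p$ at the same arc-distance. Hence $s_a$ is locally even about $p$.

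Finally I would promote local evenness to a local extremum. For this I would invoke the implicit function theorem on the polynomial star-defining system~\eqref{eq:star} together with the appropriate pole-symmetry constraints~\eqref{eq:northsym} or~\eqref{eq:eastsym} to conclude that $s_a$ is real-analytic in a neighborhood of each pole $p$. An even real-analytic function at $p$ is either constant on a neighborhood of $p$---in which case $p$ is trivially both a local max and a local min---or its first non-vanishing Taylor coefficient occurs at an even order, giving a strict local extremum of the corresponding sign.

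The main obstacle is precisely this final regularity step: continuity (Lemma~\ref{lemma:continuity of s_a}) together with local evenness is not by itself enough to rule out pathological wildly oscillating continuous even functions. The analyticity argument cleanly handles this, but it requires verifying non-degeneracy of the Jacobian of the pole-star system so that the implicit function theorem applies; this is a direct computation on the explicit pole configurations, parallel to the elimination-theory computation of $P_N$ and $P_E$ in Lemma~\ref{lemma:(3)}. If one prefers to avoid analyticity, a more hands-on alternative is to compute the first variation of the star's diameter as the base point $p_t = (\cos t, \sin t)$ moves through a pole, show $\tfrac{ds_a}{dt}(0)=0$ (immediate from evenness) and $\tfrac{d^2 s_a}{dt^2}(0)\neq 0$ by explicit perturbation of the star equations at the pole configuration.
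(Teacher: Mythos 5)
Your proof takes a genuinely different route from the paper's. Both rely on the reflection evenness $s_a(\cos\theta,\sin\theta)=s_a(\cos(-\theta),\sin(-\theta))$ at the poles, and both recognize — as you note explicitly — that evenness plus continuity alone cannot rule out a wildly oscillating even function. The paper closes this gap by showing that the level set $s_a^{-1}(s_a(1,0))$ is finite: inscribed stars of a fixed diameter solve the polynomial system~\eqref{eq:star}, which (the paper argues via B\'ezout) has only finitely many solutions, so after shrinking to a small arc $U$ around the pole one gets $s_a\neq s_a(1,0)$ on $U\setminus\{(1,0)\}$, hence one-sidedness on each flanking arc by continuity, and then evenness matches the two sides. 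You instead propose to establish real-analyticity of $s_a$ near the pole via the implicit function theorem on~\eqref{eq:star}, after which a nonconstant even analytic function at a fixed point has a strict local extremum automatically. This trades the paper's algebraic input (zero-dimensionality of the solution variety so that B\'ezout gives finiteness) for an analytic one (non-degeneracy of the Jacobian at the pole configuration); both are deferred verifications — the paper does not check that the intersection in~\eqref{eq:star} is actually zero-dimensional, and you explicitly flag the Jacobian check as the remaining obstacle. One small correction to your wording: you should apply the implicit function theorem to~\eqref{eq:star} alone, treating the base angle $\theta$ as the free parameter, and only use~\eqref{eq:northsym} or~\eqref{eq:eastsym} to identify the point at which to evaluate the Jacobian; appending those symmetry equations to the system would restrict to the pole configuration itself and would not define $s_a$ as a function of $\theta$ in a neighborhood.
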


\begin{proof}
We prove that $(1,0) \in S^1$ is an extremum of $s_a$; the arguments for the points $(-1, 0)$, $(0, 1)$, and $(0, -1)$ being extrema are similar.
Fix $a \in (0, \sqrt{2})$ and $r > 0$.
Then, any five pointed star with diameter $r$ on the ellipse $E_a$ gives a solution to the set of polynomial equations~\eqref{eq:star} from the proof of Theorem~\ref{thm:extrema}.
This is a set of ten polynomial equations in ten variables, $x_0, \dots, x_4, y_0, \dots, y_4$.
Hence, by Bezout's Theorem, there are finitely many solutions to the system in $\C$.
In particular, there are finitely many solutions in $\R$.
In words, for a fixed ellipse, there are only finitely many stars of any given diameter.
Hence, there is an $\epsilon > 0$ neighborhood
\[
    U = \{(\cos(\theta), \sin(\theta)) \colon | \theta | < \epsilon \}
\]
such that $s_a^{-1} (s_a(1, 0)) = \{(1, 0)\}$.
So, by continuity of $s_a$, we either have $s_a(\cos(\theta), \sin(\theta)) > s_a(1,0)$ for all $\theta \in (0, \epsilon)$, or $s_a(\cos(\theta), \sin(\theta)) < s_a(1,0)$ for all $\theta \in (0, \epsilon)$.
Without loss of generality, assume the former.
By symmetry of the geometric interpretation of $s_a$, we see that
\[
    s_a(\cos(\theta), \sin(\theta)) = s_a(\cos(-\theta), \sin(-\theta))
\]
for all $\theta \in (-\epsilon, \epsilon)$.
Then, $s_a(p) > s_a(1,0)$ for all $p \in U$ with $p \not = (1,0)$.
So, the point $(1,0)$ is the minimum of $s_a$ on $U$, and thus a local extrema of $s_a$ on all of $E_a$.
\end{proof}


We are now in a position to prove Theorem~\ref{thm:extrema}.

\begin{proof}
Let $a_1, a_2$ be the values provided by Lemma~\ref{lemma:(3)}.
We prove the result for $a = a_1$, but the proof is identical for $a = a_2$.
By Lemma~\ref{lemma: poles are extrema}, the points $(\pm 1, 0)$, $(0, \pm 1)$ are all extrema of $s_a$.
Furthermore, for each of these points there are four more points that are extrema of $s_a$: the four other points in the inscribed star.
It is clear geometrically that no star can contain multiple poles.
Since the inscribed star is unique at each point, these twenty points are all distinct.
Call these points
\[ v_0 \prec v_1 \prec \dots \prec v_{18} \prec v_{19}, \]
where all indices are taken modulo $20$. 
We have found twenty extrema of $s_a$, and our goal was to find twenty minima and twenty maxima.
The following claim completes our proof.
If $v_i$ is a local minimum of $s_a$, then there exists $v_i \prec w \prec v_{i+1}$ such that $w$ is a local maximum of $s_a$, where all indices are taken modulo $20$.
Similarly, if $v_i$ is a local minimum, then there exists $v_i \prec w \prec v_{i+1}$ such that $w$ is a local maximum.

To prove our claim, let $0 \le i \le 19$.
Recall that by Lemma~\ref{lemma:(3)}, $s_a(v_l) = s_a(v_k)$ for all $0 \le l < k \le 19$.

Suppose $v_i$ is a minimum and $v_{i+1}$ is a maximum.
Then, there are $v_i \prec v_i' \prec v_{i+1}' \prec v_{i+1}$ such that $s_a(v_{i+1}') < s_a(v_{i+1}) = s_a(v_i) < s_a(v_i')$.
Hence, by continuity (\ref{lemma:continuity of s_a}), $s_a$ achieves both a minimum and maximum on the open arc of the circle between $v_i$ and $v_{i+1}$.
In particular, $s_a$ achieves a maximum, as desired.

The other cases are similarly and we describe them briefly.
If $v_i$ is a maximum and $v_{i+1}$ is a minimum, then we have $v_i \prec v_i' \prec v_{i+1}' \prec v_{i+1}$ with $s_a(v_i') < s_a(v_i) = s_a(v_{i+1}) < s_a(v_{i+1}')$.
So, again $s_a$ achieves a minimum on the open arc between $v_i$ and $v_{i+1}$.
If $v_i$ and $v_{i+1}$ are both minima, then we have $v_i \prec v_i' \prec v_{i+1}$ with $s_a(v_i') > s_a(v_i) = s_a(v_{i+1})$ and so $s_a$ achieves a maximum on the open arc between $v_i$ and $v_{i+1}$.
Similarly, if $v_i$ and $v_{i+1}$ are both maxima, then we have $v_i \prec v_i' \prec v_{i+1}$ with $s_a(v_i') < s_a(v_i) = s_a(v_{i+1})$, and so $s_a$ achieves a minimum on the open arc between $v_i$ and $v_{i+1}$.
In each of the four cases, if $v_i$ is a max, there is an additional min $w$, and if $v_i$ is a min, there is an additional max $w$.
Hence, there are at least twenty minima and twenty maxima, as desired.
\end{proof}
Finally, we remark that empirical evidence strongly suggests that each $v_i$ described above is actually a global minimum of $s_a$, and between each global minimum there is exactly one global maximum (see Situation 3 in Figure~\ref{fig:star-side-lengths}).

\section{Proof of Theorem~\ref{thm:main}}
\label{sec:maintheorem}
\subsection{Proof of Theorem~\ref{thm:main}}

We now prove our main Theorem~\ref{thm:main} regarding the homotopy types of $\vr{E_a}{r}$, assuming that Conjecture~\ref{conj:main} is true.
Thoughout this section, we let $1 < a < \sqrt{2}$.
Our proof is similar to the proof of~\cite[Theorem~7.3]{AAR}, and will use~\cite[Theorems~5.1 and~5.3]{AAR}.
We outline a sketch:
\begin{itemize}
\item the proof of the $S^3$ homotopy types will use~\cite[Theorem~5.1]{AAR}
with $l=1$; 
\item the proof of the $S^4$ homotopy type will use~\cite[Theorem~5.3]{AAR} with $l=2$, $P=0$, and $F=2$; 
\item the proof of the $\bigvee^3 S^4$ homotopy type will use~\cite[Theorem~5.3]{AAR} with $l=2$, $P=0$, and $F=4$; 
\item the proof of the $S^5$ homotopy types will use~\cite[Theorem~5.1]{AAR}
with $l=2$.
\end{itemize}
We will consider the $1$-skeleton of the simplicial complex $\vr{E_a}{r}$ as a directed cyclic graph as well.
Specifically, the directed cyclic graph induced by the Vietoris--Rips complex of an ellipse is closed and continuous; see Section~\ref{sec:preliminaries}.


The proof the main theorem depends upon further definitions regarding when our Vietoris--Rips complex is singular.
We defer this discussion to Section~\ref{sec:singularcase} along with the proofs of the corresponding parts of the following lemmas.

\begin{lemma}
\label{lemma:MainTheorempt1}
If $1 < a \le a_1^-$ or $a_1^+ \le a \le a_2^-$ or $a_2^+ \le a < \sqrt{2}$, then 
\[\vr{E_a}{r}\simeq\begin{cases}
S^3 & \text{if }r_2(a)<r\le r_3(a) \\
S^4 & \text{if }r_3(a)<r\le r_4(a) \\
S^5 & \text{if }r_4(a)<r\le r_5(a).
\end{cases}\]
\end{lemma}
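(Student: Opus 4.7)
The plan is to partition the interval $(r_2(a), r_5(a)]$ into three sub-intervals $(r_2(a), r_3(a)]$, $(r_3(a), r_4(a)]$, and $(r_4(a), r_5(a)]$, compute the winding fraction of the underlying cyclic graph of $\vr{E_a}{r}$ on each piece, and invoke the appropriate result from~\cite{AAR}. For the two outer sub-intervals I would establish the strict bounds $\wf \in (1/3, 2/5)$ and $\wf \in (2/5, 3/7)$ respectively, placing us in the non-singular regime of~\cite[Theorem~5.1]{AAR} with $l=1$ and $l=2$. For the middle sub-interval I would establish the exact singular value $\wf = 2/5$ with supremum attained, invoking~\cite[Theorem~5.3]{AAR} with $l=2$.

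For $r_2(a) < r \le r_3(a)$, the lower bound on $\wf$ comes from the fact that $r > r_2(a)$ guarantees an equilateral triangle of all sides $< r$ inscribed at every point of $E_a$, yielding a $3$-cycle of winding $1$ in the cyclic graph. The upper bound comes from $r \le r_3(a)$: by definition of $r_3(a)$ as the global minimum side length of $5$-pointed stars, no inscribed star wrapping twice has all sides strictly less than $r$, so no $5$-cycle of winding $2$ exists, forcing $\wf < 2/5$. Theorem~5.1 of~\cite{AAR} with $l=1$ then gives $\vr{E_a}{r} \simeq S^3$. A dual argument for $r_4(a) < r \le r_5(a)$---every point now admits an inscribed $5$-pointed star of sides $< r$ (so $\wf > 2/5$), but no point admits a $7$-pointed star wrapping three times of sides $< r$ (so $\wf < 3/7$)---combined with Theorem~5.1 of~\cite{AAR} with $l=2$ yields $\vr{E_a}{r} \simeq S^5$.

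For $r_3(a) < r \le r_4(a)$ we are in the singular regime, since some but not all points of $E_a$ admit inscribed $5$-pointed stars of diameter $< r$. By assumption on $a$ and Conjecture~\ref{conj:main}, the side-length function $s_a$ is in Situation 1 or Situation 5, so it has exactly ten global minima, exactly ten global maxima, and is monotonic between adjacent extrema. The set $\{p \in E_a : s_a(p) < r\}$ therefore consists of exactly ten open arcs for every $r$ in this interval, and these arcs fall into two orbit classes under the $5$-pointed-star dynamics. I would argue that this forces the singular parameters of~\cite[Theorem~5.3]{AAR} to be $P=0$ and $F=2$, so that with $l=2$ we obtain $\vr{E_a}{r} \simeq \bigvee^{1} S^4 = S^4$.

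The hard part is the middle sub-interval: identifying the singular parameters $P=0$ and $F=2$ uniformly in $r$ throughout $(r_3(a), r_4(a)]$. This requires translating the qualitative picture from Conjecture~\ref{conj:main} (ten minima, ten maxima, monotonicity in between) into the combinatorial invariants of the cyclic graph, and then using the joint continuity of the step map from Section~\ref{sec:JointContF} to verify that the combinatorial structure does not jump as $r$ varies within the interval. Once this is in hand, the same continuity argument delivers the homotopy-equivalence statement for the inclusions $\vr{E_a}{r}\hookrightarrow\vr{E_a}{\tilde r}$ asserted later in Theorem~\ref{thm:main}, since the inclusion between scales at which the singular structure is identical may be shown to be a deformation retract.
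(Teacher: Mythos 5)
Your proposal follows the same strategy as the paper's proof: strict winding-fraction bounds and~\cite[Theorem~5.1]{AAR} for the two outer sub-intervals, and a singular argument via~\cite[Theorem~5.3]{AAR} with $l=2$, $P=0$, $F=2$ for the middle sub-interval, with the latter justified by the ten-minima/ten-maxima structure guaranteed by Conjecture~\ref{conj:main} in Situations 1 and 5. The paper proves the middle case in a separate Lemma (\ref{lemma:singularcase1}) by explicitly exhibiting the two invariant sets $I_1$, $I_2$ of permanently fast points, each a union of five open arcs around the vertices of one of the two globally minimal stars --- exactly the ``two orbit classes'' in your picture.

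One small imprecision worth flagging: you claim the strict bounds $\wf < 2/5$ on $(r_2(a), r_3(a)]$ and $\wf < 3/7$ on $(r_4(a), r_5(a)]$. At the right endpoints $r = r_3(a)$ and $r = r_5(a)$ the winding fraction actually equals $2/5$ and $3/7$ respectively (since stars of the critical diameter exist), and what keeps you in the non-singular regime is that the supremum in the definition of the winding fraction is \emph{not attained} there --- the critical stars have diameter exactly equal to $r$ and so are excluded by the strict inequality $\diam < r$. The paper's proof explicitly handles the $r = r_3(a)$ endpoint separately for this reason, noting the ten critical points whose stars just barely fail to be simplices. This distinction is exactly what governs whether~\cite[Theorem~5.1]{AAR} or~\cite[Theorem~5.3]{AAR} applies, so it should be made explicit rather than absorbed into a strict inequality on $\wf$.
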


\begin{proof}
Let $r_2(a)<r<r_3(a)$.
Then $\frac{1}{3}<\wf(\vr{E_a}{r})<\frac{2}{5}$.
In the case that $r= r_3(a)$, because of our choice of $a$, there will be exactly $10$ points (or $2$ stars) that attain a winding fraction of $\frac{2}{5}$.
So by~\cite[Theorem 5.1]{AAR}, we have $\vr{E_a}{r} \simeq S^3$.

See Lemma~\ref{lemma:singularcase1} for the $S^4$ case.

The case when $r_4(a)<r\leq r_5(a)$ follows similarly argument as when $r_3(a)<r\leq r_5(a)$.
Specifically, we can see that $\frac{2}{5}< \wf(\vr{E_a}{r})\leq \frac{3}{7}$ and so by~\cite[Theorem 5.1]{AAR} we have $\vr{E_a}{r} \simeq S^5$.
\end{proof}

\begin{lemma}
\label{lemma:MainTheorempt2}
If $a = a_1$ or $a = a_2$, then 
\[\vr{E_a}{r}\simeq\begin{cases}
S^3 & \text{if }r_2(a)<r\le r_3(a) \\
\bigvee^3 S^4 & \text{if }r_3(a)<r\le r_4(a) \\
S^5 & \text{if }r_4(a)<r\le r_5(a).
\end{cases}\]
\end{lemma}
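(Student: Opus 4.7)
The plan is to mirror the proof of Lemma~\ref{lemma:MainTheorempt1}, adjusting only the count of inscribed stars appearing at the singular scale. Under Conjecture~\ref{conj:main} at the exceptional eccentricities $a = a_1$ or $a = a_2$, both the global-minimum and the global-maximum level sets of $s_a$ consist of $20$ points each, corresponding to $4$ inscribed $5$-pointed stars at each extremum---double the $2$ stars appearing in the generic situations of Lemma~\ref{lemma:MainTheorempt1}. Consequently I expect the proof to go through as there, but with $F = 4$ replacing $F = 2$ in the singular-case application of~\cite[Theorem~5.3]{AAR}.

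First, for $r_2(a) < r \le r_3(a)$, the argument is identical to Lemma~\ref{lemma:MainTheorempt1}: when $r < r_3(a)$ no $5$-pointed star of side length $r$ fits, so $\wf(\vr{E_a}{r}) \in (\tfrac{1}{3}, \tfrac{2}{5})$, while at $r = r_3(a)$ exactly $20$ points (four stars) attain $\wf = \tfrac{2}{5}$. In either case~\cite[Theorem~5.1]{AAR} with $l = 1$ yields $\vr{E_a}{r} \simeq S^3$. Dually, for $r_4(a) < r \le r_5(a)$, the greedy $5$-step walk overshoots so $\wf > \tfrac{2}{5}$, while no $7$-pointed star yet fits so $\wf \le \tfrac{3}{7}$; thus $\wf(\vr{E_a}{r}) \in (\tfrac{2}{5}, \tfrac{3}{7}]$ and~\cite[Theorem~5.1]{AAR} with $l = 2$ gives $\vr{E_a}{r} \simeq S^5$.

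The novel content sits in the middle range $r_3(a) < r \le r_4(a)$, where $\wf(\vr{E_a}{r}) = \tfrac{2}{5}$ and the supremum is attained, so $\vr{E_a}{r}$ is singular in the sense of Section~\ref{sec:preliminaries}. The plan is to apply~\cite[Theorem~5.3]{AAR} with parameters $l = 2$, $P = 0$, and $F = 4$, concluding $\vr{E_a}{r} \simeq \bigvee^{F-1} S^{2l} = \bigvee^3 S^4$. In parallel with the treatment of the $S^4$ case of Lemma~\ref{lemma:MainTheorempt1}, I defer the detailed verification to the singular-case analysis in Section~\ref{sec:singularcase}, phrased as the analogue of Lemma~\ref{lemma:singularcase1} adapted to $F = 4$.

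The main obstacle is justifying the use of $F = 4$ throughout the open sub-interval $r \in (r_3(a), r_4(a))$. By the monotonicity clause of Conjecture~\ref{conj:main}, the level set $s_a^{-1}(r)$ there contains $40$ points, i.e., $8$ inscribed stars, rather than the $4$ seen at the boundary. The hard part will be showing that the extra $4$ stars pair with the original $4$ as matched attracting/repelling orbits in the cyclic dynamical system so that they contribute nothing topologically new; equivalently, one must establish that the inclusions $\vr{E_a}{r} \hookrightarrow \vr{E_a}{\tilde r}$ for $r_3(a) < r \le \tilde r \le r_4(a)$ are homotopy equivalences, reducing the computation to the singular scale $r = r_3(a)$ where $F = 4$ is unambiguous.
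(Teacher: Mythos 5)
Your proposal matches the paper's approach in its essentials: the three cases are handled exactly as in Lemma~\ref{lemma:MainTheorempt1}, with the middle range $r_3(a) < r \le r_4(a)$ treated as a singular case via~\cite[Theorem~5.3]{AAR} with parameters $l = 2$, $P = 0$, $F = 4$, yielding $\bigvee^3 S^4$ because the $20$ global minima of $s_a$ at $a = a_1, a_2$ form $4$ inscribed stars.

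The one place you go astray is in the final paragraph, where you identify ``justifying $F = 4$ throughout the open interval'' as the hard part and propose to fix it by first establishing homotopy equivalences of the inclusions $\vr{E_a}{r} \hookrightarrow \vr{E_a}{\tilde r}$, reducing to $r = r_3(a)$. This misreads what the parameter $F_{a,r}$ counts. It is not the number of level-set points of $s_a$ at height $r$ (which is indeed $40$ in the interior of the interval), but the number of disjoint \emph{invariant sets of fast points}: the set of fast points $\{p : s_a(p) < r\}$ is a union of $20$ open intervals, one around each global minimum, and these group into $4$ invariant sets under the dynamics, one per star at the global-minimum scale. This count is constant throughout $(r_3(a), r_4(a))$, so the paper applies~\cite[Theorem~5.3]{AAR} directly at each scale $r$ with no reduction step. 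The $40$ level-set points you worry about are periodic-type points, not fast points; and since the complex is built from open balls, the supremum defining periodicity is never attained, which is why $P_{a,r} = 0$ identically. Finally, note that your proposed reduction target $r = r_3(a)$ is not even in the range of interest --- at that scale $\vr{E_a}{r} \simeq S^3$, not $\bigvee^3 S^4$ --- so the reduction would be to the wrong homotopy type.
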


\begin{proof}
The cases $r_2(a)<r\leq r_3(a)$ and $r_4(a) < r\leq r_5(a)$ follow an argument similar to Lemma~\ref{lemma:MainTheorempt1}.
For the case when $r_3(a)<r\leq r_4(a)$, see Lemma~\ref{lemma:singularcase2}.
\end{proof}

\begin{lemma}
\label{lemma:MainTheorempt3}
If $a_1^- < a <a_1$ or $a_1 < a < a_1^+$ or $a_2^- < a < a_2$ or $a_2 < a < a_2^+$, then there is an additional real function $r_{7/2}$ (local minimum side-length of $5$-pointed stars) so that
\[\vr{E_a}{r}\simeq\begin{cases}
S^3 & \text{if }r_2(a)<r\le r_3(a) \\
S^4 & \text{if }r_3(a)<r\le r_{7/2}(a) \\
\bigvee^3 S^4 & \text{if }r_{7/2}(a)<r\le r_4(a) \\
S^5 & \text{if }r_4(a)<r\le r_5(a).
\end{cases}\]
\end{lemma}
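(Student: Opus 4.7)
The plan is to mirror the proofs of Lemmas~\ref{lemma:MainTheorempt1} and~\ref{lemma:MainTheorempt2}, treating the four subintervals separately. The two outer subintervals (giving $S^3$ and $S^5$) will follow immediately from~\cite[Theorem~5.1]{AAR}, while the two middle subintervals (giving $S^4$ and $\bigvee^3 S^4$) will invoke~\cite[Theorem~5.3]{AAR} and will be deferred to Section~\ref{sec:singularcase}, just as in the proofs of the previous two lemmas.

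For the first outer interval $r_2(a)<r\le r_3(a)$, I would use Conjecture~\ref{conj:main} to conclude that no $5$-pointed wrapping-twice star has side length at most $r$, so none is realized as a $5$-cycle in the cyclic graph underlying $\vr{E_a}{r}$. This forces $\wf(\vr{E_a}{r})\in(\tfrac{1}{3},\tfrac{2}{5})$, so~\cite[Theorem~5.1]{AAR} with $l=1$ yields $\vr{E_a}{r}\simeq S^3$. Symmetrically, for $r_4(a)<r\le r_5(a)$ every $5$-pointed wrapping-twice star is realized while no $7$-pointed wrapping-thrice star is, so $\wf(\vr{E_a}{r})\in(\tfrac{2}{5},\tfrac{3}{7})$, and~\cite[Theorem~5.1]{AAR} with $l=2$ yields $\vr{E_a}{r}\simeq S^5$.

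The two middle intervals both correspond to singular Vietoris--Rips complexes with $\wf(\vr{E_a}{r})=\tfrac{2}{5}$ and the supremum attained, so the plan is to apply~\cite[Theorem~5.3]{AAR} with $l=2$ and $P=0$, varying only the count $F$ of attained extremal $5$-pointed stars. On $(r_3(a),r_{7/2}(a)]$, Conjecture~\ref{conj:main} gives exactly the two global-minimum stars (ten fast vertices) with side length strictly less than $r$, so taking $F=2$ will deliver $S^4$. On $(r_{7/2}(a),r_4(a)]$, the two additional stars through the ten local minima of $s_a$ also satisfy $s_a<r$, so four stars are realized and taking $F=4$ will deliver $\bigvee^3 S^4$. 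The full verification of the hypotheses of~\cite[Theorem~5.3]{AAR} in these two subintervals will be deferred to Section~\ref{sec:singularcase}, in analogy with Lemmas~\ref{lemma:singularcase1} and~\ref{lemma:singularcase2}.

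The hardest part will be the careful accounting at the threshold $r=r_{7/2}(a)$ itself. Since the Vietoris--Rips convention uses strict inequality on diameters, the local-minimum stars of diameter exactly $r_{7/2}(a)$ are not yet realized as $5$-cycles at $r=r_{7/2}(a)$, so $F$ should remain $2$ at the right endpoint of the first middle subinterval and only jump to $4$ strictly above $r_{7/2}(a)$. Joint continuity of the step dynamics from Section~\ref{sec:JointContF}, combined with the monotonicity of $s_a$ between adjacent extrema guaranteed by Conjecture~\ref{conj:main}, should ensure that no other stars enter or leave the realized set within these windows, so that $F=2$ and $F=4$ are exact and constant on the respective intervals.
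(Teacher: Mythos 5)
Your proposal is correct and takes essentially the same approach as the paper: the outer intervals are handled by~\cite[Theorem~5.1]{AAR} with $l=1$ and $l=2$ respectively, and the two middle intervals are handled by~\cite[Theorem~5.3]{AAR} with $l=2$, $P=0$, and $F=2$ or $F=4$, with the singular-case details deferred to Section~\ref{sec:singularcase} exactly as in Lemma~\ref{lemma:singularcase3}. Your extra remark about the strict-inequality convention at the threshold $r=r_{7/2}(a)$ is a correct and useful clarification that the paper leaves implicit.
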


\begin{proof}
The cases where $r_2(a)<r\leq r_3(a)$ and $r_4(a)<r\leq r_5(a)$ follow similarly as in Lemma~\ref{lemma:MainTheorempt1}.
For the remaining cases, see Lemma~\ref{lemma:singularcase3}.
\end{proof}

We are now ready to prove the main result of this section.

\begin{proof}[Proof of Theorem~\ref{thm:main}]
The homotopy types of the Vietoris--Rips complex $\vr{E_a}{r}$ in the relevant ranges of scale $r$ have been shown in Lemmas~\ref{lemma:MainTheorempt1},~\ref{lemma:MainTheorempt2}, and~\ref{lemma:MainTheorempt3}.

We now prove the homotopy equivalence of the inclusion $\vr{E_a}{r}\hookrightarrow \vr{E_a}{\tilde{r}}$.
Let $a\in (1,\sqrt{2})$ given.
Suppose $r,\tilde{r}$ are real numbers such that $r_2(a) < r\leq \tilde{r} \leq r_3(a)$ or $r_4(a)<r\leq \tilde{r}<r_5(a)$.
We observe that 
\[\tfrac{1}{3} <\wf( \vr{E_a}{r}) \leq \wf( \vr{E_a}{\tilde{r}})\leq \tfrac{2}{5}\] 
in the first case and
\[
\tfrac{2}{5}  <\wf( \vr{E_a}{r}) \leq \wf( \vr{E_a}{\tilde{r}})\leq\tfrac{3}{7} 
\]
in the second case.
In either of these cases, by~\cite[Theorem 5.1]{AAR}, the inclusion induces a homotopy equivalence.

Now suppose $1 < a \le a_1^-$, $a = a_1$, $a_1^+ \le a \le a_2^-$, or $a = a_2$.
Let $r,\tilde{r}$ be such that $r_3(a)<r\le \tilde{r}\le r_4(a)$ or $a_2^+ \le a < \sqrt{2}$.
Fix a finite subset $W\in \widetilde{\fin}(\vr{E_a}{r})$.
From the proof of~\cite[Theorem~5.3]{AAR}, we can deduce that 
\[\text{colim}_{W\in \widetilde{\fin}(\vr{E_a}{r})} \vr{W}{r} \simeq \text{hocolim}_{W\in \widetilde{\fin}(\vr{E_a}{r})}\vr{W}{r}\] 
and thus the inclusion $\iota_r\colon \vr{W}{r} \hookrightarrow \vr{E_a}{r}$ induces a homotopy equivalence.
By~\cite[Lemma 5.9]{AAR}, we can find some $\widetilde{W}\supseteq W$ such that $\widetilde{W}\in \widetilde{\fin}(\vr{E_a}{\tilde{r}})$.
Using a similar argument as before, the inclusion $\iota_{\tilde{r}}\colon\vr{\widetilde{W}}{\tilde{r}}\hookrightarrow\vr{E_a}{\tilde{r}}$ is a homotopy equivalence.
By~\cite[Lemma 5.8]{AAR}, we can conclude that the inclusion $\iota_W\colon \vr{W}{r} \hookrightarrow\vr{\widetilde{W}}{\tilde{r}}$ is a homotopy equivalence.
Hence, the following diagram commutes:

\[\begin{tikzcd}
	{\text{VR}(W;r)} && {\text{VR}(E_a;r)} \\
	\\
	{\text{VR}(\widetilde{W};\tilde{r})} && {\text{VR}(E_a;\tilde{r})}
	\arrow["{\iota_r}"', from=1-1, to=1-3]
	\arrow["{\iota_W}", from=1-1, to=3-1]
	\arrow["{\iota_{E_a}}"', from=1-3, to=3-3]
	\arrow["{\iota_{\tilde{r}}}", from=3-1, to=3-3]
\end{tikzcd}\]
which shows the inclusion $\iota_{E_a}\colon \vr{E_a}{r}\hookrightarrow\vr{E_a}{\tilde{r}}$ is a homotopy equivalence.

Suppose $a_1^- < a <a_1$, $a_1 < a < a_1^+$, $a_2^- < a < a_2$, or $a_2 < a < a_2^+$, and let $r,\tilde{r}$ be such that $r_3(a)<r\le \tilde{r}\le r_{7/2}(a)$ or $r_{7/2}(a)<r\le \tilde{r}\le r_4(a)$.
The proof that the inclusion $\vr{E_a}{r}\hookrightarrow\vr{E_a}{\tilde{r}}$ is a homotopy equivalence follows by a similar argument as the above case.

Suppose $a_1^- < a <a_1, a_1 < a < a_1^+, a_2^- < a < a_2$, or $a_2 < a < a_2^+$.
We now prove that for $r_3(a)<r\leq r_{7/2}(a) < \tilde{r} \leq r_4(a)$, the inclusion $\vr{E_a}{r}\hookrightarrow\vr{E_a}{\tilde{r}}$ induces a rank 1 map on $4$-dimensional homology.
Fix some $W\in \widetilde{\fin}(\vr{E_a}{r})$.
By cofinality, we can find some $\tilde{W}\in \widetilde{\fin}(\vr{E_a}{\tilde{r}})$ such that $W\subseteq \tilde{W}$.
As before, the following diagram commutes:
\[\begin{tikzcd}
	{\text{VR}(W;r)} && {\text{VR}(E_a;r)} \\
	\\
	{\text{VR}(\widetilde{W};\tilde{r})} && {\text{VR}(E_a;\tilde{r})}
	\arrow["{\iota_r}"', from=1-1, to=1-3]
	\arrow["{\iota_W}", from=1-1, to=3-1]
	\arrow["{\iota_{E_a}}"', from=1-3, to=3-3]
	\arrow["{\iota_{\tilde{r}}}", from=3-1, to=3-3]
\end{tikzcd}\]
As before, each of the $\iota$ maps are homotopy equivalences.
Applying homology, we get that the following diagram also commutes:
\[\begin{tikzcd}
	{H_4(\text{VR}(W;r),\mathbb{F})} && {H_4(\text{VR}(E_a;r),\mathbb{F})} \\
	\\
	{H_4(\text{VR}(\widetilde{W};\tilde{r}),\mathbb{F})} && {H_4(\text{VR}(E_a;\tilde{r}),\mathbb{F})}
	\arrow["{{\iota_r^*}}"', from=1-1, to=1-3]
	\arrow["{{\iota_W^*}}", from=1-1, to=3-1]
	\arrow["{{\iota_{E_a}^*}}"', from=1-3, to=3-3]
	\arrow["{{\iota_{\tilde{r}}^*}}", from=3-1, to=3-3]
\end{tikzcd}\]
Since each $\iota$ are homotopy equivalences, then each $\iota^*$ are isomorphisms between homology groups.

Now we will show that $H(i^*_{W})=2$.
Notice that at the scale $r$, there are only two periodic orbits in $\vr{W}{r}$.
They can be found by considering the permanently fast points $I_0,I_1$ in $\vr{E_a}{r}$ and consider that the intersections $\vr{W}{r}\cap I_0$ and $\vr{W}{r} \cap I_1$.
The points in each of these intersections are invariant under our dynamical system.
Further, since there are only finitely many points, each of these intersections contain exactly one periodic orbit.
However note that for $\vr{\tilde{W}}{\tilde{r}}$ has four periodic orbits as a consequence of Lemma~\ref{lemma:MainTheorempt3}.
Hence, the inclusion maps two periodic orbits to two periodic orbits.
By~\cite[Proposition 4.2]{AAR}, $\text{rank}(i_W^*) = 2-1 = 1$.
\end{proof}

\subsection{Details regarding the singular case}
\label{sec:singularcase}

The definition of the parameter $l$ may be defined in more generality, however since our principal investigation is focused on $5$ pointed stars, we let $l=2$.

Assume we are given a singular Vietors--Rips complex.
Since our Vietoris--Rips complex is generated by open balls, the set of fast points is the disjoint union of finitely many open intervals.
In fact, each of the open intervals appear as open neighborhoods about the vertices of the global minimal stars.
Hence, for each $5$ pointed star, there are $5$ total such open sets.
Letting $I$ be the union of these sets, we find that $I$ is invariant under our dynamical system.
Thus, we call $I$ an invariant set of fast points.
We define $F_{a,r}$ to count the number of disjoint invariant sets of fast points.

Finally, let $P_{a,r}$ count the number of periodic orbits.
However note that since our because our complex is generated by open balls, we never attain a periodic orbit in the complex.
Hence, $P_{a,r}=0$ for all $a,r$.

We will often omit the $a$ and $r$ when the context is clear.
The following lemmas will be used to prove the theorem.

\begin{lemma}[Singular case of Lemma~\ref{lemma:MainTheorempt1}]
\label{lemma:singularcase1}
Let $1 < a \le a_1^-$ or $a_1^+ \le a \le a_2^-$ or $a_2^+ \le a < \sqrt{2}$, and $r_3(a)<r\leq r_4(a)$ then $\vr{E_a}{r}\simeq S^4$
\end{lemma}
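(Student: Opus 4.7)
The plan is to apply~\cite[Theorem~5.3]{AAR} with parameters $l=2$, $P=0$, and $F=2$, as outlined in the proof sketch preceding the statement. Under these hypotheses that theorem yields $\vr{E_a}{r}\simeq S^{2l}=S^4$, so the task reduces to verifying three things: (i) the cyclic graph underlying $\vr{E_a}{r}$ has winding fraction $\tfrac{2}{5}$ with the supremum attained; (ii) there are no periodic orbits in the complex, i.e.\ $P_{a,r}=0$; and (iii) the fast points decompose into exactly two disjoint invariant sets under the step dynamics, i.e.\ $F_{a,r}=2$.

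For (i), since $r>r_3(a)$ strictly exceeds the minimum side length of a $5$-pointed star, continuity of $s_a$ (Lemma~\ref{lemma:continuity of s_a}) produces at least one inscribed $5$-pointed star of diameter less than $r$, giving a directed $5$-cycle in the cyclic graph that wraps twice around $E_a$. This shows $\wf(\vr{E_a}{r})\ge\tfrac{2}{5}$ with the supremum attained. Conversely, $r\le r_4(a)<r_5(a)$ lies strictly below the minimum $7$-pointed star scale, so no directed $7$-cycle wrapping three times exists, ruling out $\wf\ge\tfrac{3}{7}$. Hence $\wf(\vr{E_a}{r})=\tfrac{2}{5}$ and we are in the singular case of Section~\ref{sec:singularcase}.

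Claim (ii) is immediate from the strict inequality $\operatorname{diam}(\sigma)<r$ in the definition of the Vietoris--Rips complex: any candidate periodic orbit would correspond to a $5$-pointed star of diameter exactly $r$, which is absent from $\vr{E_a}{r}$. For (iii), Conjecture~\ref{conj:main} specifies, in each of the stated ranges of $a$, exactly ten global minima of $s_a$ organized as two $5$-pointed stars (based at $(\pm a,0)$ for $1<a\le a_1^-$ or $a_2^+\le a<\sqrt{2}$, and based at $(0,\pm 1)$ for $a_1^+\le a\le a_2^-$), with $s_a$ strictly monotone between adjacent extrema. Hence the sublevel set $\{p\in E_a : s_a(p)<r\}$ is a disjoint union of exactly ten open arcs, one around each vertex of a global-minimum star. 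By the joint continuity and monotonicity of the step map $F_a$ established in Section~\ref{sec:JointContF}, the dynamics cyclically permutes the five arcs associated with the vertices of a single star, so each of the two stars contributes one invariant set of fast points, yielding $F_{a,r}=2$. Invoking~\cite[Theorem~5.3]{AAR} then gives $\vr{E_a}{r}\simeq S^4$.

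The main obstacle is step (iii): one must know not only where the global minima of $s_a$ lie but also that no other point of $E_a$ is fast. This is precisely where the monotonicity clause of Conjecture~\ref{conj:main} enters, since it forbids additional valleys of $s_a$ that would dip below $r$ away from the global minima. Without this clause, an extra pair of sub-$r_4$ minima could inflate $F_{a,r}$ and change the homotopy type from $S^4$ to a larger wedge of $4$-spheres, so the delicacy of the argument is inherited entirely from the conjectural picture of $s_a$.
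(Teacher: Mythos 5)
Your proposal follows essentially the same route as the paper: invoke Conjecture~\ref{conj:main} to locate the ten global minima, identify the sublevel set $\{p : s_a(p)<r\}$ as ten open arcs forming two invariant orbits under the step dynamics, take $P=0$ from the open-ball definition of the complex, and apply~\cite[Theorem~5.3]{AAR} with $l=2$, $P=0$, $F=2$ to get $S^4$. Parts (ii) and (iii) match the paper's argument almost verbatim.

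The one place where your write-up is not sound is step (i). You argue that a $5$-cycle witnesses $\wf\ge\tfrac{2}{5}$ and that the absence of a $7$-cycle rules out $\wf\ge\tfrac{3}{7}$, and conclude $\wf=\tfrac{2}{5}$. But the winding fraction of a cyclic graph can take any value in $[0,\tfrac{1}{2}]$, not just the numbers $\tfrac{l}{2l+1}$: a finite cyclic subgraph realizing, say, winding fraction $\tfrac{5}{12}$ would lie strictly between $\tfrac{2}{5}$ and $\tfrac{3}{7}$ and would not be excluded by your two observations, yet it would put the complex in the regular range where~\cite[Theorem~5.1]{AAR} yields $S^5$. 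The upper bound $\wf\le\tfrac{2}{5}$ has to come from the other endpoint of the interval: because $r\le r_4(a)=\max s_a$, the point realizing the global maximum of $s_a$ has $s_a(p)\ge r$, so it is a slow vertex, and the coexistence of slow and fast vertices pins the winding fraction at exactly $\tfrac{2}{5}$. To be fair, the paper's own proof of this lemma is terse and simply states $l=2$ without spelling this out either; you attempted to supply the missing justification and the attempt, as written, does not close the gap.
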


\begin{proof}
Let $r_3(a)<r\leq  r_4(a)$.
By Conjecture~\ref{conj:main} and the intermediate value theorem, there exist open sets 
\[
I_1 = (p_1^1(r), p_2^1(r))_{E_a} \cup (p_1^2(r),p_2^2(r))_{E_a}\cup \ldots \cup (p_1^5(r),p_2^5(r))_{E_a} \text{ and}
\]
\[
I_2 = (p_3^1(r), p_4^1(r))_{E_a} \cup (p_3^2(r),p_4^2(r))_{E_a}\cup \ldots \cup (p_3^5(r),p_4^5(r))_{E_a}
\]
that are each invariant sets of permanently fast points for $\vr{E_a}{r}$.
We point out here that the sets $I_1$ and $I_2$ correspond to the union of open neighborhoods about the $10$ points with an inscribed star that has radius equal to the global minima.
Hence, we have $l=2$,$P=0$ and $F=2$.
By~\cite[Theorem 5.3]{AAR} we can conclude $\vr{E_a}{r} = \bigvee^{0+2-1}S^4 = S^4$.
\end{proof}

\begin{lemma}[Singular case of Lemma~\ref{lemma:MainTheorempt2}]
\label{lemma:singularcase2}
If $a = a_1$ or $a = a_2$, and $r_3(a)<r\leq r_4(a)$ then $\vr{E_a}{r} \simeq \bigvee^3S^4$.
\end{lemma}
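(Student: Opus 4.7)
The plan is to mirror the strategy of Lemma~\ref{lemma:singularcase1} and apply~\cite[Theorem~5.3]{AAR} with $l=2$, $P=0$, and $F=4$, which will yield the desired homotopy type $\bigvee^{P+F-1} S^4 = \bigvee^3 S^4$.

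First I would invoke Conjecture~\ref{conj:main} in Situation~3: for $a = a_1$ or $a = a_2$, the side-length function $s_a$ has twenty global minima realizing the value $r_3(a)$ and twenty global maxima realizing $r_4(a)$, with $s_a$ monotonic between adjacent extrema. Since the inscribed $5$-pointed star based at a given point of $E_a$ is unique and contains exactly five vertices, these twenty minima partition into exactly four distinct minimal stars (for instance the stars based at $(\pm a, 0)$ and $(0, \pm 1)$, which by the parity argument from the proof of Theorem~\ref{thm:extrema} are all distinct).

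Next I would fix $r \in (r_3(a), r_4(a)]$. Applying the intermediate value theorem to $s_a$ as in Lemma~\ref{lemma:singularcase1}, and using the monotonicity between adjacent extrema, the set of permanently fast points of $\vr{E_a}{r}$ decomposes into twenty pairwise disjoint open arcs, one around each global minimum of $s_a$. The step map $F_a(\cdot, r)$ cyclically permutes the five arcs surrounding the vertices of a single minimal star, so the twenty arcs collect into four disjoint invariant unions $I_1, I_2, I_3, I_4$ of permanently fast points, giving $F = 4$. Because $\vr{E_a}{r}$ is built from open balls, no orbit closes up inside the vertex set, so $P = 0$. Applying~\cite[Theorem~5.3]{AAR} with $l = 2$, $P = 0$, $F = 4$ then yields $\vr{E_a}{r} \simeq \bigvee^{0+4-1} S^4 = \bigvee^3 S^4$.

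The main obstacle will be to verify carefully that the twenty minima produce \emph{exactly} four disjoint invariant sets, rather than merging into fewer or fragmenting into more. This reduces to two geometric claims: that distinct minimal stars share no vertex, which is immediate from uniqueness of the star based at a point; and that $F_a(\cdot, r)$ carries each arc around a vertex of a given minimal star into the arc around the next vertex of the same star. The latter will follow from the joint continuity of $F$ established in Section~\ref{sec:JointContF} together with the monotonicity of $s_a$ between adjacent extrema asserted in Conjecture~\ref{conj:main}, which together ensure that for $r$ only slightly above $r_3(a)$ each fast arc is small enough that its image under $F_a(\cdot, r)$ lies inside the next fast arc of the same star and nowhere else.
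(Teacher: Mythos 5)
Your proposal is correct and takes essentially the same approach as the paper: invoke Situation~3 of Conjecture~\ref{conj:main} to obtain twenty global minima, organize the resulting fast arcs into four invariant sets (one per minimal star) to get $F=4$ with $P=0$ and $l=2$, and apply~\cite[Theorem~5.3]{AAR} to conclude $\vr{E_a}{r}\simeq\bigvee^{3}S^4$. Your closing discussion of why the twenty arcs group into exactly four invariant sets is in fact slightly more careful than the paper's own terse proof, though note the argument should cover the full range $r_3(a)<r\le r_4(a)$ rather than just $r$ slightly above $r_3(a)$; the key point is that the fast arcs around minima stay disjoint throughout because they are separated by the global maxima, which only become periodic at $r=r_4(a)$.
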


\begin{proof}
By Conjecture~\ref{conj:main}, there are $20$ global minima.
As with Lemma~\ref{lemma:singularcase1}, there are $4$ sets $I_1,I_2,I_3,$ and $I_4$ which are each the union of $5$ intervals, each containing a unique point whose inscribed star has radius equal to the global minima radius.
Thus, there are $4$ invariant sets of permanently fast points.
By applying~\cite[Theorem 5.3]{AAR} with $P=0$, $F=4$ and $l=2$ then $\vr{E_a}{r} \simeq \bigvee^3S^4$.
\end{proof}

\begin{lemma}[Singular case of Lemma~\ref{lemma:MainTheorempt3}]
\label{lemma:singularcase3}
If $a_1^- < a <a_1$ or $a_1 < a < a_1^+$ or $a_2^- < a < a_2$ or $a_2 < a < a_2^+$, then there is an additional real function $r_{7/2}$ (local minimum side-length of $5$-pointed stars) so that
\[\vr{E_a}{r} \simeq \begin{cases}
S^4 & \text{if }r_3(a)<r\le r_{7/2}(a) \\
\bigvee^3 S^4 & \text{if }r_{7/2}(a)<r\le r_4(a) 
\end{cases}\]
\end{lemma}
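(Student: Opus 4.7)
The plan is to mirror the strategy of Lemmas~\ref{lemma:singularcase1} and~\ref{lemma:singularcase2}: identify the invariant sets of permanently fast points at the given scale, and then invoke~\cite[Theorem~5.3]{AAR} with $l=2$ and $P=0$. The two cases will differ only in the count $F$ of invariant fast sets. First I would define $r_{7/2}(a)$ to be the (strictly larger) local-minimum value of $s_a$ guaranteed by Conjecture~\ref{conj:main}, attained at the ten local-minimum points which, by the conjecture, constitute two $5$-pointed stars wrapping twice around $E_a$.

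For the first regime $r_3(a)<r\leq r_{7/2}(a)$, we have $\wf(\vr{E_a}{r})=\tfrac{2}{5}$. By the monotonicity clause of Conjecture~\ref{conj:main}, the sublevel set $s_a^{-1}([0,r))$ consists of exactly ten connected components, clustered as five-tuples around the two global-minimum stars. The $2/5$-rotation dynamics cyclically permutes each five-tuple, so these ten intervals assemble into two disjoint invariant sets $I_1,I_2$ of permanently fast points, exactly as in Lemma~\ref{lemma:singularcase1}. Applying~\cite[Theorem~5.3]{AAR} with $l=2$, $P=0$, $F=2$ then yields $\vr{E_a}{r}\simeq\bigvee^{0+2-1}S^4=S^4$.

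For the second regime $r_{7/2}(a)<r\leq r_4(a)$, the ten local-minimum points now also lie in the interior of $s_a^{-1}([0,r))$, so by monotonicity the sublevel set acquires ten additional connected components and thus has twenty in total, clustered into four stars of five. A parallel dynamical argument produces four invariant sets $I_1,I_2,I_3,I_4$ of permanently fast points, and~\cite[Theorem~5.3]{AAR} with $l=2$, $P=0$, $F=4$ then gives $\vr{E_a}{r}\simeq\bigvee^{0+4-1}S^4=\bigvee^3 S^4$, just as in Lemma~\ref{lemma:singularcase2}.

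The main obstacle will be rigorously justifying the component-count claim at the transition $r=r_{7/2}(a)$: that the ten local-minimum points genuinely switch from slow to permanently fast as $r$ crosses $r_{7/2}(a)$, and that between $r_3(a)$ and $r_{7/2}(a)$ no further invariant fast sets appear. This reduces to a careful analysis, using the joint continuity of $F$ from Section~\ref{sec:JointContF} together with the monotonicity portion of Conjecture~\ref{conj:main}, of how the connected components of $s_a^{-1}([0,r))$ evolve with $r$, and of the fact that the $2/5$-rotation dynamics restricts to a five-cycle permutation on each newly appearing cluster, so that each of the four star-clusters contributes exactly one invariant set rather than merging with or splitting from another.
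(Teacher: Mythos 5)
Your proposal matches the paper's proof essentially exactly: define $r_{7/2}(a)$ as the local-minimum value of $s_a$, show that for $r_3(a)<r\le r_{7/2}(a)$ there are $F=2$ invariant sets of permanently fast points (from the two global-minimum stars) yielding $S^4$, and that for $r_{7/2}(a)<r\le r_4(a)$ the ten local-minimum points become fast as well, yielding $F=4$ invariant sets and hence $\bigvee^3 S^4$, all via~\cite[Theorem~5.3]{AAR} with $l=2$, $P=0$. Your closing paragraph honestly flags the same gap the paper glosses over---that the transition at $r=r_{7/2}(a)$ and the exact component count rest on the monotonicity clause of Conjecture~\ref{conj:main}---so your version is if anything slightly more careful than the paper's.
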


\begin{proof}
By Conjecture~\ref{conj:main}, there exist $10$ points, corresponding to the points on two inscribed stars, which are locally minimum with respect to the side length of the inscribed star.
Hence, there exists a function $r_{7/2}:(1,\sqrt{2})\to (0,2)$ which map an eccentricity value $a$ to the radius value which is a local minimum.

Suppose $r_3(a)<r\leq r_{7/2}(a)$.
Then there exists only $2$ invariant sets of permanently fast points; they are exactly those that contain the sets that contain the points of the two (globally) minimally inscribed stars.
By a similar argument as Lemma~\ref{lemma:MainTheorempt1}, we have $\vr{E_a}{r} \simeq S^4$.

Suppose otherwise that $r_{7/2}(a)<r\leq r_4(a)$.
Then there exists only $4$ invariant sets of permanently fast points.
Two of these sets contain the $5$ points which are on the two (globally) minimally inscribed stars.
Furthermore, since $r_{7/2}(a)<r$ then there are two sets that contain the $10$ points associated with the two stars of the (locally) minimally inscribed stars.
These are also invariant sets of permanently fast points.
Therefore, by a similar reasoning as in Lemma~\ref{lemma:MainTheorempt2}, we deduce $\vr{E_a}{r} \simeq \bigvee^3S^4$.
\end{proof}

\section{Joint continuity}
\label{sec:JointContF}

\subsection{Definition of $F$}
We use many functions to analyze the Vietoris--Rips complex of an ellipse.
In this section, we provide the rigorous definitions for them, along with proofs of continuity.

Let $d : \R^2 \times \R^2 \rightarrow \R$ be the Euclidean distance function and recall from Section~\ref{sec:preliminaries} that $\varphi_a \colon S^1\to E_a$ is the natural homeomorphism between a circle and an ellipse of eccentricity $a$ defined by $\varphi_a(\cos \theta,\sin \theta) = (a \cos\theta, \sin \theta)$.
As mention in Section~\ref{sec:preliminaries}, whenever a function has domain or codomain $S^1$ but refers to the ellipse $E_a$, assume this function is composed with $\varphi_a$ or its inverse, as necessary.
For any ellipse $a \in (1, \sqrt{2})$ define a function $h_a\colon S^1 \to S^1$ which maps a point $p$ on the ellipse $E_a$ to the other unique point intersected by a normal line of $E_a$ at point $p$.
By~\cite[Lemma~6.4]{AAR}, $h_a$ is a bijection for $a\in (1,\sqrt{2})$.
Then define $F\colon S^1\times(0,2)\times(1,\sqrt{2})\to S^1$ by 
\[F(p,r,a) = \argmax_{q\in [p,h^{-1}_a(p)]} \{d(\varphi_a(p),\varphi_a(q))~|~d(\varphi_a(p),\varphi_a(q)) \le r, p \preceq q\}.\]
Intuitively, $F$ is the function that takes a step clockwise to the point on the ellipse $E_a$ that is a Euclidean distance $r$ away from $p$.

Next, we define an extension of $F$ which allows us to take $n$-fold compositions.
Define $F^n$ recursively as follows.
First, set $F^1(p,r,a) = F(p,r,a)$.
Next, define $F^{n+1}(p,r,a) = F(F^n(p,r,a),r,a)$ for all $n >= 1$.

If we fix a point $p$ on an ellipse of eccentricity $a$ and a winding fraction $\frac{\alpha}{\beta}$ with $\alpha,\beta\in\N$, there is exactly one radius $r$ such that $\gamma_\beta(p)=\alpha$ when $p$ is acted on by our dynamical system $F$.
Hence, we can define the side length function $s: S^1\times (1,\sqrt{2})\times (0,1) \to \R$ by
\[s(p,a,\tfrac{\alpha}{\beta}) =\inf\left\{r\in \R~|~\sum_{i=1}^{\beta}\vec{d}(F^i(p,r,a),F^{i+1}(p,r,a)) \geq \alpha\right\}.\]
In this paper, we are principally interested in the case when the winding fraction is $\frac{2}{5}$.
We can thus redefine, with some abuse of notation, $s \colon S^1\times (1,\sqrt{2}) \to \R$ as our previously defined $s$ function, but fixing $\frac{\alpha}{\beta}=\frac{2}{5}$.
This definition is exactly the one we provide in Section~\ref{sec:sidelength}.

\subsection{Continuity of $F$}

We will prove that $F\colon S^1\times(0,2)\times(1,\sqrt{2})\to S^1$ is continuous (i.e., jointly continuous in all three variables).
First, let us consider a function studied in~\cite{AAR}.
The authors define a function $G\colon E_a\times (0,2) \to E_a$ where $E_a$ is an ellipse of small eccentricity.
Specifically, the function is defined as 
\[G(p,r) =\argmax_{q\in [p,h^{-1}(p)]_{E_a}} \{d(p,q) ~ | ~d(p,q)\leq r\}.\]
Our function $F$ is defined very similarly and is a natural extension of the function $G$.
With $G$, we are fixing beforehand a given ellipse $E_a$ with small eccentricity.
However, with $F$, we are allowing the eccentricity of the ellipse to vary along with the point and radius as necessary.
Thus with this view, we note that for a fixed ellipse $E_a$ and our function $G$ defined with respect to $E_a$, then $G(-,-) = F(-,-,a)$.
With such a view, it is clear that results proven about $G$ will automatically hold for our function $F$, with fixed eccentricities $a$.
In order to prove the result we require, we need to first show $F$ is continuous with respect to eccentricity.

\begin{lemma}
\label{ray:FcontIna}
For a fixed point $p\in S^1$ and $0<r<2$, the function $F(p,r,-)\colon (1,\sqrt{2})\to S^1$ is continuous.
\end{lemma}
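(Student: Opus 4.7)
The plan is to reduce the continuity of $F(p,r,-)$ to the continuity of two simpler ingredients: the joint continuity of the distance function $D(q,a) := d(\varphi_a(p),\varphi_a(q))$, and the continuity of $a\mapsto h_a^{-1}(p)$, which marks the clockwise endpoint of the \emph{safe arc} on which $D(\cdot,a)$ is monotonically increasing. The argmax definition of $F$ over $[p,h_a^{-1}(p)]_{S^1}$ makes this reduction natural, and the monotonicity of $D(\cdot,a)$ on the safe arc (see~\cite[Lemma~6.4]{AAR}) will let me either invert $D$ directly or invoke the implicit function theorem.

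First I would check that $D$ is jointly continuous. This is immediate: the parametrization $\varphi_a(\cos\theta,\sin\theta)=(a\cos\theta,\sin\theta)$ is jointly continuous in $(\theta,a)$, and the Euclidean distance on $\mathbb{R}^2$ is continuous, so $D(q,a)$ depends continuously on $(q,a)$. Next I would show that $a\mapsto h_a^{-1}(p)$ is continuous. The point $h_a^{-1}(p)$ is the second intersection of $E_a$ with the normal line to $E_a$ at $\varphi_a(p)$; the direction and basepoint of this normal depend continuously (in fact smoothly) on $a$, and its second intersection with $E_a$ is the nontrivial root of a polynomial whose coefficients depend continuously on $a$, so the implicit function theorem gives continuity of $h_a^{-1}(p)$ in $a$.

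Now fix $a_0\in(1,\sqrt{2})$ and set $q_0 = F(p,r,a_0)$. In the generic case $D(h_{a_0}^{-1}(p),a_0) > r$, the map $q\mapsto D(q,a)$ is strictly increasing on $[p,h_a^{-1}(p)]_{S^1}$ from $0$ to $D(h_a^{-1}(p),a)$ for every $a$ in a neighborhood of $a_0$, so $F(p,r,a)$ is the unique solution of $D(q,a)=r$ in $(p,h_a^{-1}(p))$. Given $\varepsilon>0$, pick $q^-\prec q_0\prec q^+$ in $(p,h_{a_0}^{-1}(p))_{S^1}$ inside an $\varepsilon$-neighborhood of $q_0$ with $D(q^-,a_0)<r<D(q^+,a_0)$. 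Joint continuity of $D$ preserves these strict inequalities for $a$ near $a_0$, and monotonicity together with the intermediate value theorem force $F(p,r,a)\in(q^-,q^+)_{S^1}$, yielding continuity at $a_0$.

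The main obstacle is the boundary case $D(h_{a_0}^{-1}(p),a_0)\leq r$, where $q_0=h_{a_0}^{-1}(p)$ and $F$ may toggle between an interior solution of $D(q,a)=r$ and the endpoint $h_a^{-1}(p)$ as $a$ varies. Here I would argue as follows: for $a$ near $a_0$, if $D(h_a^{-1}(p),a)\leq r$ then $F(p,r,a)=h_a^{-1}(p)$, which is close to $q_0$ by continuity of $a\mapsto h_a^{-1}(p)$; if instead $D(h_a^{-1}(p),a) > r$, then $F(p,r,a)$ is the unique interior point where $D(\cdot,a)=r$, and joint continuity of $D$ at $(q_0,a_0)$ (where $D(q_0,a_0)\leq r$) forces this interior root to be squeezed close to $h_a^{-1}(p)$, hence close to $q_0$. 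Gluing these two branches shows $F(p,r,\cdot)$ is continuous at $a_0$ in the boundary case too, completing the proof.
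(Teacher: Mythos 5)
Your proof is correct but takes a genuinely different route from the paper's. The paper proves that $a \mapsto F(p,r,a)$ is strictly monotone in $a$ (by a direct distance comparison: if $a_1 < a_2$ then $d(\varphi_{a_2}(p),\varphi_{a_2}(F(p,r,a_1))) > r$, so $F(p,r,a_2) \prec F(p,r,a_1)$), then shows it is surjective onto an arc by applying the intermediate value theorem to $a \mapsto d(\varphi_a(p),\varphi_a(q)) - r$ for an arbitrary target $q$, and finally invokes the fact that a surjective monotone map onto an interval is automatically continuous. You instead decompose $F$ into more primitive ingredients---the joint continuity of $D(q,a) = d(\varphi_a(p),\varphi_a(q))$ and the continuity of $a \mapsto h_a^{-1}(p)$---and then run a local $\varepsilon$-$\delta$ squeeze using the monotonicity of $D(\cdot,a)$ in $q$ (rather than the monotonicity of $F$ in $a$). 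Both approaches are valid; the paper's monotone-surjection argument is shorter and never needs the continuity of $h_a^{-1}$, whereas your local argument is more robust (it would survive losing monotonicity in $a$) and explicitly flags the potential boundary case $D(h_{a_0}^{-1}(p),a_0) \leq r$, which the paper's computation $d(\varphi_{a_1}(p),\varphi_{a_1}(p_1)) = r$ silently assumes away. In fact that boundary case is vacuous here: for $1 < a < \sqrt{2}$ the normal chord of $E_a$ has length at least $2 > r$, with equality only at $(0,\pm 1)$, so $F(p,r,a)$ always solves $D(q,a) = r$ exactly---but it is good practice that your argument would handle it, and worth noting that your claimed continuity of $a \mapsto h_a^{-1}(p)$ is only sketched (via the implicit function theorem, whose non-degeneracy hypothesis you should verify) and is an ingredient the paper's proof does not require.
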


\begin{proof}
Let $p\in S^1$, $r\in (0,2)$, and $a_1,a_2\in (1,\sqrt{2})$ be given.
Let us restrict the codomain of $F$ to the subspace $X=(F(p,r,\sqrt{2}),F(p,r,1))_{S^1}$ so that there is an order-preserving homeomorphism from $X$ to $\R$.
We note here that $X$ is a connected proper subspace of $S^1$.
    
First, we will show that the function is monotonic in $a$ with respect to the $\preceq$ ordering in the codomain.
Without loss of generality suppose  $a_1<a_2$.
Let $p_1 = F(p,r,a_1)$ and $p_2= F(p,r,a_2)$.
We will parameterize $p$ as $(\cos t,\sin t)$ and $p_i$ as $(\cos t_i,\sin t_i)$.

Then 
\[d(\varphi_{a_1}(p),
\varphi_{a_1}(p_1))= \sqrt{a_1^2(\cos t - \cos t_1)^2 + (\sin t- \sin t_1)^2} = r = d(\varphi_{a_2}(p), \varphi_{a_2}(p_2)).\]
However, consider 
\[d(\varphi_{a_2}(p),\varphi_{a_2}(p_1)) = \sqrt{a_2^2(\cos t - \cos t_1)^2 + (\sin t- \sin t_1)^2} > r = d(\varphi_{a_2}(p), \varphi_{a_2}(p_2)).\]
By definition of $F$, we have $p\preceq p_1$ and $p\preceq p_2$.
Since $d(\varphi_{a_2}(p),\varphi_{a_2}(p_1)) > d(\varphi_{a_2}(p),\varphi_{a_2}(p_2))$ then $p_2 \prec p_1$ because $\varphi_a$ is order-preserving.
Hence, $F$ is strictly monotonically decreasing with respect to $a$.

We now claim that $F(p,r,-)$ is surjective onto $X$.
Let $q$ be a given point in $X$.
We note that as $a$ increases, the distance between $\varphi_a(p)$ and $\varphi_a(q)$ increases.
Thus consider the function $g:(1,\sqrt{2}) \to \R$ by $g(a) = d(\varphi_a(p),\varphi_a(q)) - r$.
Notice that since $F(p,r,-)$ is monotonic, there is $\epsilon > 0$ such that $F(p,r,\sqrt{2} - \epsilon) \prec q \prec F(p,r,1 + \epsilon)$.
So notably, 
\[\varphi_a(F(p,r,\sqrt{2} - \epsilon)) \prec \varphi_a(q) \prec \varphi_a(F(p,r,1 + \epsilon))\]
for any $a\in (1,\sqrt{2})$.
Hence, $g(1 + \epsilon) < 0$ since $ \varphi_{1 + \epsilon}(q) \prec \varphi_{1 + \epsilon}(F(p,r,1 + \epsilon))$.
Likewise, $g(\sqrt{2} - \epsilon) >0$ since $\varphi_{\sqrt{2} - \epsilon} (F(p,r,\sqrt{2} - \epsilon)) \prec \varphi_{\sqrt{2} - \epsilon}(q)$.
By the intermediate value theorem, there exists some $a \in (1 + \epsilon, \sqrt{2} - \epsilon)$ such that $g(a) = 0$, meaning also that $F(p,r,a) = q$.
This shows $F(p,r,-)$ is indeed surjective onto $X$.

Now, since there is clearly an order-preserving homeomorphism from $X$ to an interval in $\R$, we conclude that $F(p,r,-)$ is continuous~\cite{ProofWiki_SurjectiveMonotoneFunctionIsContinuous}.
\end{proof}

\begin{theorem}
$F$ is jointly continuous in all variables.
\end{theorem}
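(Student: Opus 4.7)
The plan is to prove joint continuity by a direct $\varepsilon$-$\delta$ argument exploiting the implicit characterization of $F$. The key observation is that for $r \in (0, 2)$, the point $q = F(p, r, a)$ is uniquely determined as the point in the clockwise arc $[p, h_a^{-1}(p)]_{S^1}$ satisfying $D(p, q, a) = r$, where
$$
D(p, q, a) = d(\varphi_a(p), \varphi_a(q)).
$$
This auxiliary function $D$ is jointly continuous in all three variables, since $\varphi_a$ depends continuously on $(p, a)$ (an explicit trigonometric-polynomial expression in the coordinates of $p$ and in $a$) and the Euclidean metric on $\R^2$ is continuous.

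Now fix a base point $(p_0, r_0, a_0)$ and let $q_0 = F(p_0, r_0, a_0)$. Since $r_0 < 2 \le D(p_0, h_{a_0}^{-1}(p_0), a_0)$, the point $q_0$ lies strictly in the interior of the arc $[p_0, h_{a_0}^{-1}(p_0)]_{S^1}$, and $D(p_0, \cdot, a_0)$ is strictly increasing on this arc. Given $\varepsilon > 0$, pick $q_-, q_+ \in S^1$ at clockwise arc-distance $\varepsilon/2$ on either side of $q_0$, chosen small enough to remain in the interior of the arc. By strict monotonicity,
$$
D(p_0, q_-, a_0) < r_0 < D(p_0, q_+, a_0).
$$
Joint continuity of $D$ in $(p, a)$ with $q_\pm$ held fixed produces an open neighborhood $U \times V$ of $(p_0, a_0)$ in $S^1 \times (1,\sqrt{2})$ and an $\eta > 0$ such that $D(p, q_-, a) < r_0 - \eta$ and $D(p, q_+, a) > r_0 + \eta$ for all $(p, a) \in U \times V$. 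After shrinking $U \times V$ so that $q_-, q_+$ remain in the valid arc $[p, h_a^{-1}(p)]_{S^1}$ for all perturbed $(p, a)$, the strict monotonicity of $D(p, \cdot, a)$ along this arc forces $q_- \prec F(p, r, a) \prec q_+$ whenever $|r - r_0| < \eta$ and $(p, a) \in U \times V$, placing $F(p, r, a)$ within arc-distance $\varepsilon$ of $q_0$.

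The hardest part of this plan is verifying joint continuity of $h_a^{-1}(p)$ in $(p, a)$, which is needed to ensure that the chosen witnesses $q_\pm$ remain inside the monotonic range $[p, h_a^{-1}(p)]_{S^1}$ for perturbed parameters. This reduces to showing that the other intersection of the normal line to $E_a$ at $\varphi_a(p)$ with $E_a$ varies continuously in $(p, a)$ -- essentially an implicit-function-theorem argument applied to a jointly continuous family of polynomial equations, whose solution is locally unique away from the four symmetry-fixed points of $h_a$ lying on the axes. Those fixed points may be treated separately using the reflectional symmetry of $E_a$ about both coordinate axes, completing the continuity check.
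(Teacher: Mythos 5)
Your proof is correct in outline but takes a genuinely different route from the paper's. The paper establishes separate continuity of $F$ in each of $p$, $r$, and $a$ (citing two lemmas from~\cite{AAR} for $p$ and $r$, and its own Lemma~\ref{ray:FcontIna} for $a$), notes monotonicity of $F$, and then invokes a general result of Kruse (\cite[Proposition~2]{kruse1969joint}) that separate continuity plus monotonicity forces joint continuity. You instead prove joint continuity directly, sandwiching $F(p,r,a)$ between witness points $q_-$ and $q_+$ via the implicit characterization $D\bigl(p,F(p,r,a),a\bigr)=r$, the strict monotonicity of $D(p,\cdot,a)$ on the clockwise arc $[p,h_a^{-1}(p)]_{S^1}$, and the joint continuity of the explicit distance function $D$. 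Your route is more self-contained (no appeal to the Kruse citation) and makes the underlying intermediate-value mechanism fully visible; the trade-off is that you must argue $(p,a)\mapsto h_a^{-1}(p)$ is jointly continuous in order to keep $q_\pm$ inside the monotone range after perturbation. You correctly flag this as the remaining gap and sketch a plausible implicit-function-theorem argument, but it would need to be carried out to make the proof complete; the paper sidesteps the issue by inheriting the needed facts about $h_a$ through the cited lemmas of~\cite{AAR}. One detail worth making explicit in your write-up is the inequality $D\bigl(p_0,h_{a_0}^{-1}(p_0),a_0\bigr)\ge 2$, needed so that every $r<2$ is attained in the interior of the arc: it holds because $h_a^{-1}(p)$ is the point of $E_a$ farthest from $p$, and already the distance from $p=(a\cos\theta,\sin\theta)$ to its antipode $-p$ equals $2\sqrt{a^2\cos^2\theta+\sin^2\theta}\ge 2$.
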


\begin{proof}
The continuity of $F$ follows naturally from an extension of the proof of~\cite[Lemma 6.8]{AAR}.
Specifically, we note that for each fixed value eccentricity value $a$, both $p$ and $r$ are monotonic with respect to $a$.
From~\cite[Lemmas~6.6 and~6.7]{AAR} and Lemma~\ref{ray:FcontIna}, we have that $F$ is separately continuous in each variable.
Hence by~\cite[Proposition 2]{kruse1969joint}, $F$ is jointly continuous in all three variables.
\end{proof}

\begin{corollary}
\label{cor:F'-cont}
$F^i$ is jointly continuous in all variables for all $i$.
\end{corollary}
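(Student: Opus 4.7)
The plan is to proceed by induction on $i$, using the joint continuity of $F$ established in the preceding theorem together with the fact that compositions of continuous functions are continuous.

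For the base case $i = 1$, there is nothing to prove: $F^1 = F$ is jointly continuous by the preceding theorem.

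For the inductive step, I would assume that $F^n \colon S^1 \times (0,2) \times (1, \sqrt{2}) \to S^1$ is jointly continuous, and show the same for $F^{n+1}$. The key observation is to introduce the auxiliary map
\[
G \colon S^1 \times (0,2) \times (1, \sqrt{2}) \to S^1 \times (0,2) \times (1, \sqrt{2})
\]
defined by $G(p, r, a) = (F^n(p, r, a),\, r,\, a)$. This $G$ is continuous because each of its component functions is continuous: the first coordinate is $F^n$, continuous by the inductive hypothesis, while the second and third coordinates are the (continuous) projection maps onto $r$ and $a$. Now the recursive definition $F^{n+1}(p, r, a) = F(F^n(p, r, a), r, a)$ says precisely that $F^{n+1} = F \circ G$. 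Since the composition of continuous functions between topological spaces is continuous, and both $F$ and $G$ are continuous, $F^{n+1}$ is jointly continuous, completing the induction.

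There is no significant obstacle here: all of the work has been done in establishing joint continuity of $F$ itself (the preceding theorem), and the induction reduces the problem to the standard fact about compositions. The only minor care needed is to confirm that the inductive step is well-formed, i.e. that $F^n$ has codomain $S^1$ (matching the domain slot of $F$) and that the tuple $(F^n(p,r,a), r, a)$ does indeed lie in $S^1 \times (0,2) \times (1,\sqrt{2})$, which is immediate from the definitions.
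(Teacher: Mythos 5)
Your proof is correct and is exactly the argument the paper leaves implicit (the corollary is stated without a written proof). The induction on $i$, packaging $(F^n(p,r,a),r,a)$ as a continuous map into $S^1\times(0,2)\times(1,\sqrt{2})$ and composing with $F$, is the standard and intended route; there is nothing to add.
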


\section{Conclusion}
\label{sec:conclusion}

While we did not supply a proof for Conjecture~\ref{conj:main}, we have supplied numerical evidence to support our belief in the conjecture.
Furthermore, should Conjecture~\ref{conj:main} be true, then Theorem~\ref{thm:main} follows, which extends the known homotopy type of $\vr{E_a}{r}$ at scales $r$ greater than those studied in~\cite{AAR}.

We end with a list of open questions.

\begin{enumerate}

\item
The case of 3-pointed stars in~\cite{AAR} has simpler behavior than the case of 5-pointed stars considered here.
What is the behavior of inscribed 7-pointed, 9-pointed, 11-pointed, \ldots stars in an ellipse? 
Does a general pattern emerge relating the number of extrema to the number of points on the star?
Answers to this question would allow one to understand the homotopy types of $\vr{E_a}{r}$ for scales $r$ even larger than those considered in this paper.

\item
We observe critical behavior in the side-length function when the eccentricity of the ellipse $E_a$ is $a = a_1 \approx 1.3299$ or $a = a_2 \approx 1.4123$.
What is special about these two particular values of eccentricity $a$?
Is there a closed form expression for these two critical values?

\item
A proof of Conjecture~\ref{conj:main} is still essential to complete the proof of the homotopy types of $\vr{E_a}{r}$, at the scales described above.
What tools would be useful for such a proof?

\item
Let $E_{a,b}\coloneqq\{(x,y,z)\in\R^3~|~(x/a)^2+(y/b)^2+z^2=1\}$ be an ellipsoid with $a\ge b\ge 1$ and $a>1$.
What is the smallest diameter $r$ (as a function of $a$ and $b$) such that some tetrahedron inscribed in $E_a$ of diameter at most $r$ contains the origin $(0,0,0)$?
What are the homotopy types of the Vietoris--Rips complexes $\vr{E_{a,b}}{r}$?

\end{enumerate}

\bibliographystyle{plain}
\bibliography{VietorisRipsComplexesOfEllipsesAtLargerScales}

\begin{thebibliography}{10}

\bibitem{AA-VRS1}
Micha{\l} Adamaszek and Henry Adams.
\newblock The {V}ietoris--{R}ips complexes of a circle.
\newblock {\em Pacific Journal of Mathematics}, 290:1--40, 2017.

\bibitem{AAR}
Micha{\l} Adamaszek, Henry Adams, and Samadwara Reddy.
\newblock On {V}ietoris--{R}ips complexes of ellipses.
\newblock {\em Journal of Topology and Analysis}, 11:661--690, 2019.

\bibitem{ABV}
Henry Adams, Johnathan Bush, and {\v{Z}}iga Virk.
\newblock The connectivity of {V}ietoris--{R}ips complexes of spheres.
\newblock {\em arXiv preprint arXiv:2407.15818}, 2024.

\bibitem{Carlsson2009}
Gunnar Carlsson.
\newblock Topology and data.
\newblock {\em Bulletin of the American Mathematical Society}, 46(2):255--308,
  2009.

\bibitem{ChazalDeSilvaOudot2014}
Fr{\'e}d{\'e}ric Chazal, Vin de~Silva, and Steve Oudot.
\newblock Persistence stability for geometric complexes.
\newblock {\em Geometriae Dedicata}, 174:193--214, 2014.

\bibitem{EdelsbrunnerHarer}
Herbert Edelsbrunner and John~L Harer.
\newblock {\em Computational Topology: An Introduction}.
\newblock American Mathematical Society, Providence, 2010.

\bibitem{Hausmann1995}
Jean-Claude Hausmann.
\newblock On the {V}ietoris--{R}ips complexes and a cohomology theory for
  metric spaces.
\newblock {\em Annals of Mathematics Studies}, 138:175--188, 1995.

\bibitem{kruse1969joint}
RL~Kruse and JJ~Deely.
\newblock Joint continuity of monotonic functions.
\newblock {\em The American Mathematical Monthly}, 76(1):74--76, 1969.

\bibitem{Latschev2001}
Janko Latschev.
\newblock Vietoris--{R}ips complexes of metric spaces near a closed
  {R}iemannian manifold.
\newblock {\em Archiv der Mathematik}, 77(6):522--528, 2001.

\bibitem{ProofWiki_SurjectiveMonotoneFunctionIsContinuous}
{ProofWiki contributors}.
\newblock Surjective monotone function is continuous.
\newblock
  \url{https://proofwiki.org/wiki/Surjective\_Monotone\_Function\_is\_Continuous},
  2025.
\newblock Accessed: 2025-11-11.

\bibitem{Turillo2025ellipse_homotopy}
Niccolo Turillo.
\newblock ellipse\_homotopy: Tools for experimentally exploring the homotopy
  type of {V}ietoris-–{R}ips complexes generated from ellipses.
\newblock \url{https://github.com/nturillo/ellipse_homotopy/tree/main}, 2025.
\newblock Latest commit as of 2025-11-11.

\end{thebibliography}





\end{document}